\documentclass[a4paper,12pt,reqno]{amsart}
\pagestyle{plain}
\usepackage{amssymb}
\usepackage{latexsym}
\usepackage{amsmath}
\usepackage[english]{babel}
\usepackage{amsfonts}
\usepackage{enumerate}
\usepackage[all]{xy}
\usepackage{verbatim}
\usepackage{epic,eepic}
\CompileMatrices

\newlength{\widthuparrow}
\settowidth{\widthuparrow}{\ensuremath{\uparrow}}
\newcommand{\nuparrow}{\
\raisebox{-2.5pt}{\makebox[\widthuparrow][l]{\makebox[0pt]{{\reflectbox{\begin{rotate}{90}%
\ensuremath{\nrightarrow}\end{rotate}}}}}}\ }

\newcommand{\ind}{\operatorname{ind}}
\newcommand{\ev}{\operatorname{ev}}
\newcommand{\lbr}{\begin{bmatrix}}
\newcommand{\rbr}{\end{bmatrix}}
\newcommand{\Ker}{\operatorname{Ker}}
\newcommand{\I}{\operatorname{Im}}
\newcommand{\im}{\operatorname{im}}\newcommand{\Coker}{\operatorname{Coker}}
\newcommand{\Hom}{\operatorname{Hom}}
\newcommand{\Ext}{\operatorname{Ext}}
\newcommand{\Tor}{\operatorname{Tor}}
\newcommand{\Ind}{\operatorname{Ind}}
\newcommand{\Res}{\operatorname{Res}}
\newcommand{\Lie}{\operatorname{Lie}}
\newcommand{\End}{\operatorname{End}}
\newcommand{\rd}{\operatorname{d}}\newcommand{\SM}{\operatorname{SM}}\newcommand{\Stab}{\operatorname{Stab}}
\newcommand{\St}{\operatorname{St}}
\newcommand{\soc}{\operatorname{soc}}
\newcommand{\rad}{\operatorname{rad}}
\newcommand{\hd}{\operatorname{hd}}
\newcommand{\cha}{\operatorname{char}}
\newcommand{\ch}{\operatorname{ch}}
\newcommand{\pr}{\operatorname{pr}}
\newcommand{\Di}{\operatorname{Div}}
\newcommand{\height}{\operatorname{ht}}
\newcommand{\spa}{\operatorname{span}}
\newcommand{\sign}{\operatorname{sign}}
\usepackage{bbm} 
\newcommand\cA{\mathcal A}
\newcommand\cC{\mathcal C}
\newcommand\C{\mathbb C}
\newcommand\Z{\mathbb Z}
\newcommand\R{\mathbb R}
\newcommand\X{\mathbb X}
\newcommand\N{\mathbb N}
\newcommand\Q{\mathbb Q}
\newcommand\hb{\mathbb H}
\newcommand\F{\mathbb F}
\newcommand\g{\mathfrak g}
\newcommand\gt{{\tilde{g}}}
\newcommand\K{\mathcal K}
\newcommand\cN{\mathcal N}
\newcommand\cQ{\mathcal Q}
\newcommand\cc{\mathcal C}
\newcommand\mo{\operatorname{mod}}

\newcommand\pf{\noindent {\bf Proof:  }}
\newcommand\lem{\noindent {\bf Lemma.  }}
\newcommand\exer{\noindent {\bf Exercise.  }}
\newcommand\notation{\noindent {\bf Notation.  }}
\newcommand\remark{\noindent {\bf Remark.  }}
\newcommand\iu{{\underline{i}}}
\newcommand\ju{{\underline{j}}}
\newcommand\au{{\underline{a}}}
\newcommand{\hni}{H_i^{0}}

\usepackage{amsthm}
\newtheorem{thm}{Theorem}[section]

\newtheorem{prop}[thm]{Proposition}

\theoremstyle{definition}
%[subsection]

\numberwithin{equation}{section}

\title{Cohomology of line bundles on the flag variety for type $G_2$
}
\author{Henning Haahr Andersen and 
Kaneda Masaharu }

\address{HHA: Centre for Quantum Geometry of Moduli Spaces, Aarhus University,
8000  Aarhus C, Denmark}
\email{mathha@qgm.au.dk}

\address{KM: Department of Mathematics, Osaka City University,
Osaka 558-8585 Japan}
\email{kaneda@sci.osaka-cu.ac.jp}
\thanks
{KM supported in part
by JSPS Grant in Aid
for Scientific Research \#23540023}

\parskip 0.5cm
\begin{document}

\maketitle
\begin{abstract}
In the case of a 
%almost 
simple algebraic group $G$ of type $G_2$ over a field of characteristic $p>0$ we study the cohomology modules of line bundles on the 
flag variety for $G$. Our main result is a complete determination of the vanishing behavior of such cohomology in the case where the line bundles in question are induced by
characters 
from the lowest $p^2$-alcoves.

When $U_q$ is the quantum group corresponding to $G$ whose parameter $q$ is a complex root of unity of order prime to $6$ we give a complete (i.e. covering all characters) 
description of the vanishing behavior for the corresponding quantized cohomology modules.
\end{abstract}
 
\section{Introduction}
Throughout this paper $G$ will denote an almost simple algebraic group of type $G_2$ over a field of characteristic $p>5$. We shall give a complete description of the 
vanishing behavior of the cohomology of those line bundles on the flag variety for $G$ which correspond to weights from the lowest $p^2$-alcoves in all the Weyl chambers
for $G$. To achieve this we shall need almost all available methods that we know for such computations. 

The strategy we use will apply to other types as well but a similar description for higher ranks seems out of reach. Our computations also give some information about the 
$G$-structure of the
cohomology modules but the full story here is still open. By an illustrating example (see Section 8) we demonstrate that our present techniques are not enough to handle the case where the
weights lie outside the $p^2$-alcoves. On the other hand, we can handle all weights for the corresponding quantized $G_2$ situation at complex roots of unity, see below and
Section 9.

Our assumption $p>5$ is necessary in order for alcoves to contain integral weights in their interior. We need this in order to apply translation functors effectively. However,
many parts of our methods apply for $p\leq 5$ as well but we leave it to the reader to formulate and check the statements in these cases (for instance when $p=2$ the only weight
in the lowest $p^2$-range is $-\rho$ so there is nothing to check in that case). Our results are given in terms of figures
of alcoves decorated with those numbers $i$ for which there is an extra non-vanishing $i$-th cohomology of the line bundles corresponding to the weights of the alcove. To see the
patterns in these figures we need to take $p$ a bit larger,
$p\geq17$, although our proofs work for all $p>5$! 
The figures for smaller $p$ will be a subset of our figures. We leave 
to the reader to do the appropriate cuts.

Let $U_q$ denote the quantum group of type $G_2$ with $q$ a complex root of unity of order $l$. We assume that $l$ is prime to $6$. Then there are cohomology modules for $U_q$
(derived functors of the induction functor from a Borel subalgebra of $U_q$) which are quantized counterparts of the above line bundle cohomology. Our techniques apply to this
case as well. Moreover, the modular phenomena of `higher alcoves' (for the powers of $p$) are not present in this case. Hence we are able to give the complete vanishing behavoir
of quantized line bundle cohomology for type $G_2$.

The questions studied in this paper go back to the final section of \cite{A81}. Here the first author sketched a description of the vanishing behavior of line bundle cohomology on
the
flag varieties for all rank $2$ groups. However, as pointed out by the second author there are some errors in the statements and J. Humphreys pinpointed in \cite{H} some specific
inaccuracies in the $G_2$-case. In the survey article \cite{A07} some of this was repaired but when we needed this kind of results in connection with our work \cite{AK10} we
decided to carefully go through all computations. The outcome of this was recorded in Appendix B of \cite{AK10} where we referred to a preliminary version of the present paper
for details. With a few modifications we follow the 6 step program outlined in this appendix.
 
 We are grateful to J. E. Humphreys for many discussions over the years on these issues and for some detailed comments on the contents of the preliminary version of this paper.

\begin{figure}
\begin{center}
{\tiny%
  \setlength{\unitlength}{3.6mm}
  \begin{picture}(40,44)
%%%%%%%%%%%%%%%%%%% chambers
\put(22.5,40.6){\Large\makebox(0,0){$e$}}
\put(31.5,35.6){\Large\makebox(0,0){$t$}}
\put(12.5,40.6){\Large\makebox(0,0){$s$}}
\put(4.5,35.6){\Large\makebox(0,0){$x=s
t$}}
\put(31,25.2){\Large\makebox(0,0){$y=t
s$}}
\put(4.5,25.2){\Large\makebox(0,0){$z=
s
t
s$}}
\put(30.5,18.2){\Large\makebox(0,0){$w=
t
s
t$}}
\put(30.5,7.2){\Large\makebox(0,0){$
t
z$}}

% %%%%%%%%%%%"Žš
%%%%%%%%%%%%%%%%%%%%%%8th column
\put(18.05,23.6){\makebox(0,0){1}}
\put(18.6,24.7){\makebox(0,0){2}}
\put(18.2,25.6){\makebox(0,0){3}}
%%%%%%%%%%%%%%%%%%%%%%%
\put(18.2,26.6){\makebox(0,0){4}}
\put(18.6,27.6){\makebox(0,0){5}}
\put(18.05,28.6){\makebox(0,0){6}}
\put(19.6,27.9){\makebox(0,0){7}}
\put(19,29.1){\makebox(0,0){8}}
\put(19.4,30.0){\makebox(0,0){11}}
\put(19.4,31){\makebox(0,0){13}}
 \put(18.9,32.0){\makebox(0,0){15}}
 \put(19.5,33.0){\makebox(0,0){16}} 
\put(18.05,32.5){\makebox(0,0){19}}
\put(18.5,33.4){\makebox(0,0){21}}
\put(18.2,34.4){\makebox(0,0){23}}   
%%%%%%%%%%%%%%%%%%%%%%%%%%
\put(18.2,35.3){\makebox(0,0){26}}
\put(18.5,36.3){\makebox(0,0){28}}
\put(19.5,36.8){\makebox(0,0){30}}

%%%%%%%%%%%%%%%%%%%%%%9th column
\put(20.4,27.9){\makebox(0,0){9}}
\put(21.0,29.0){\makebox(0,0){10}}
\put(20.5,30){\makebox(0,0){12}}
\put(20.5,31){\makebox(0,0){14}}
\put(21.2,31.8){\makebox(0,0){17}}
 \put(20.5,32.8){\makebox(0,0){18}}
  \put(22,32.5){\makebox(0,0){20}}
  \put(21.5,33.4){\makebox(0,0){22}}
  \put(21.9,34.4){\makebox(0,0){25}}
  \put(21.9,35.3){\makebox(0,0){27}}
  \put(21.5,36.3){\makebox(0,0){29}}   
  
 %%%%%%%%%%%%%%%%%%%%%%%%10th column
 \put(23,32.5){\makebox(0,0){24}}
  \put(23.5,33.4){\makebox(0,0){31}}
  \put(23,34.4){\makebox(0,0){32}}
  \put(23,35.3){\makebox(0,0){33}}
  \put(23.5,36.3){\makebox(0,0){34}}    
  \put(23.0,37.3){\makebox(0,0){35}}

 %%%%%%%%%%%%%%%%%%%%%%%%%%
  %%%%%%%%ŽÎ'ß
  \multiput(0,0)(5,0){7}{\line(4,7){5}}%
  \multiput(0,8.7)(5,0){7}{\line(4,7){5}}%
  \multiput(0,17.4)(5,0){7}{\line(4,7){5}}%
  \multiput(0,26.2)(5,0){7}{\line(4,7){5}}%
  \multiput(0,34.8)(5,0){7}{\line(4,7){5}}%
 % \multiput(0,35)(4,0){10}{\line(4,7){4}}%
  %%%%%%%%%%%%%%%%%%%%
  \multiput(5,0)(5,0){7}{\line(-4,7){5}}%
  \multiput(5,8.7)(5,0){7}{\line(-4,7){5}}%
  \multiput(5,17.4)(5,0){7}{\line(-4,7){5}}%
  \multiput(5,26.2)(5,0){7}{\line(-4,7){5}}%
  \multiput(5,34.8)(5,0){7}{\line(-4,7){5}}%
 % \multiput(4,35)(4,0){10}{\line(-4,7){4}}%
  %%%%%%%%%%%%%%%%%%%%%
  \multiput(0,0)(0,2.9){4}{\line(7,4){5}}%
 \multiput(0,14.6)(0,2.9){4}{\line(7,4){5}}\multiput(0,23.3)(0,2.90){7}{\line(7,4){5}}
 \multiput(5.0,0)(0,2.89){6}{\line(7,4){5}}%
 \multiput(5.0,17.5)(0,2.89){9}{\line(7,4){5}} \multiput(10,0)(0,2.89){6}{\line(7,4){5}}%
 \multiput(10,20.4)(0,2.89){8}{\line(7,4){5}}
\multiput(15,0)(0,2.89){7}{\line(7,4){5}}%
 \multiput(15,23.3)(0,2.89){7}{\line(7,4){5}}
\multiput(20,0)(0,2.89){8}{\line(7,4){5}}%
 \multiput(20,26.1)(0,2.89){6}{\line(7,4){5}}
 \multiput(25,0)(0,2.89){9}{\line(7,4){5}}%
 \multiput(25,28.9)(0,2.89){5}{\line(7,4){5}}
\multiput(30,0)(0,2.89){10}{\line(7,4){5}}%
 \multiput(30,31.8)(0,2.89){4}{\line(7,4){5}}
%\multiput(35,0)(0,2.89){11}{\line(7,4){5}}%
% \multiput(35,34.7)(0,2.89){3}{\line(7,4){5}}
% \multiput(12,0)(0,7){6}{\line(7,4){12}}%
  %\multiput(16,0)(0,7){6}{\line(7,4){12}}%
  %\multiput(20,0)(0,7){6}{\line(7,4){12}}%
%  \multiput(24,0)(0,7){6}{\line(7,4){12}}%
 % \multiput(28,0)(0,7){6}{\line(7,4){12}}%
  %\multiput(32,0)(0,7){6}{\line(7,4){8}}%
 % \multiput(36,0)(0,7){6}{\line(7,4){4}}%
  %\multiput(4,7)(0,7){6}{\line(-7,-4){4}}%
 % \multiput(8,7)(0,7){6}{\line(-7,-4){8}}%
  %%%%%%%%%%%%%%%%%%%%%
  \multiput(4.9,0)(0,2.91){10}{\line(-7,4){5}}%
  \multiput(4.9,31.9)(0,2.91){4}{\line(-7,4){5}}
  \multiput(9.9,0)(0,2.91){9}{\line(-7,4){5}}%
  \multiput(9.9,29.0)(0,2.91){5}{\line(-7,4){5}}
  \multiput(14.9,0)(0,2.91){8}{\line(-7,4){5}}%
  \multiput(14.9,26.1)(0,2.91){6}{\line(-7,4){5}}
  \multiput(20,0)(0,2.91){7}{\line(-7,4){5}}%
  \multiput(20,23.2)(0,2.91){7}{\line(-7,4){5}}
  \multiput(25,0)(0,2.91){6}{\line(-7,4){5}}%
  \multiput(25,20.3)(0,2.91){8}{\line(-7,4){5}}
  \multiput(30,0)(0,2.91){5}{\line(-7,4){5}}%
  \multiput(30,17.4)(0,2.91){9}{\line(-7,4){5}}
  \multiput(35,0)(0,2.91){4}{\line(-7,4){5}}%
  \multiput(35,14.5)(0,2.91){10}{\line(-7,4){5}}
 %\multiput(40,0)(0,2.91){1}{\line(-7,4){5}}%
%  \multiput(40,11.6)(0,2.91){11}{\line(-7,4){5}}
% \multiput(8,0)(0,7){6}{\line(-7,4){8}}%
 % \multiput(12,0)(0,7){6}{\line(-7,4){12}}%
%  \multiput(16,0)(0,7){6}{\line(-7,4){12}}%
 % \multiput(20,0)(0,7){6}{\line(-7,4){12}}%
  %\multiput(24,0)(0,7){6}{\line(-7,4){12}}%
 % \multiput(28,0)(0,7){6}{\line(-7,4){12}}%
  %\multiput(32,0)(0,7){6}{\line(-7,4){12}}%
% \multiput(36,0)(0,7){6}{\line(-7,4){12}}%
  %\multiput(40,0)(0,7){6}{\line(-7,4){12}}%
%  \multiput(32,7)(0,7){6}{\line(7,-4){8}}%
 % \multiput(36,7)(0,7){6}{\line(7,-4){4}}%
 
  %%%%%%%'†S'©'瑾ü
\allinethickness{3pt}
% \linethickness{3pt}
  \put(20,21.7){\line(1,0){15}}
  \put(20,21.7){\line(-1,0){20}}
  \put(17.5,21){\line(0,1){22.5}}
  \put(17.5,21){\line(0,-1){21}}
  %%%%%
  \put(17.5,21.7){\line(4,7){12.4}}
  \put(17.5,21.7){\line(-4,-7){12.4}}
  \put(17.5,21.7){\line(-4,7){12.4}}
  \put(17.5,21.7){\line(4,-7){12.4}}
  \put(17.5,21.8){\line(7,4){17.5}}
  \put(17.5,21.7){\line(-7,-4){17.5}}
  \put(17.5,21.7){\line(7,-4){17.5}}
  \put(17.5,21.8){\line(-7,4){17.5}}

 %%%%%%%P_2
% \allinethickness{1.5pt}
%\put(0,32){\line(7,4){17.5}}
%\put(17.5,42){\line(7,-4){17.5}}
%\put(17.5,1.4){\line(7,4){17.5}}
%\put(0,11.4){\line(7,-4){17.5}}
% \put(0,11.4){\line(0,1){20.6}}
% \put(35,11.4){\line(0,1){20.6}}

  %%%%%%%
  \linethickness{0.075mm}%%%%
  \multiput(0,0)(2.5,0){15}{\line(0,1){43.4}}%c
  \multiput(0,0)(0,4.35){11}{\line(1,0){35}}%‰¡
\end{picture}}

\caption{Label of some alcoves
and Weyl chambers}
\end{center}
\end{figure}
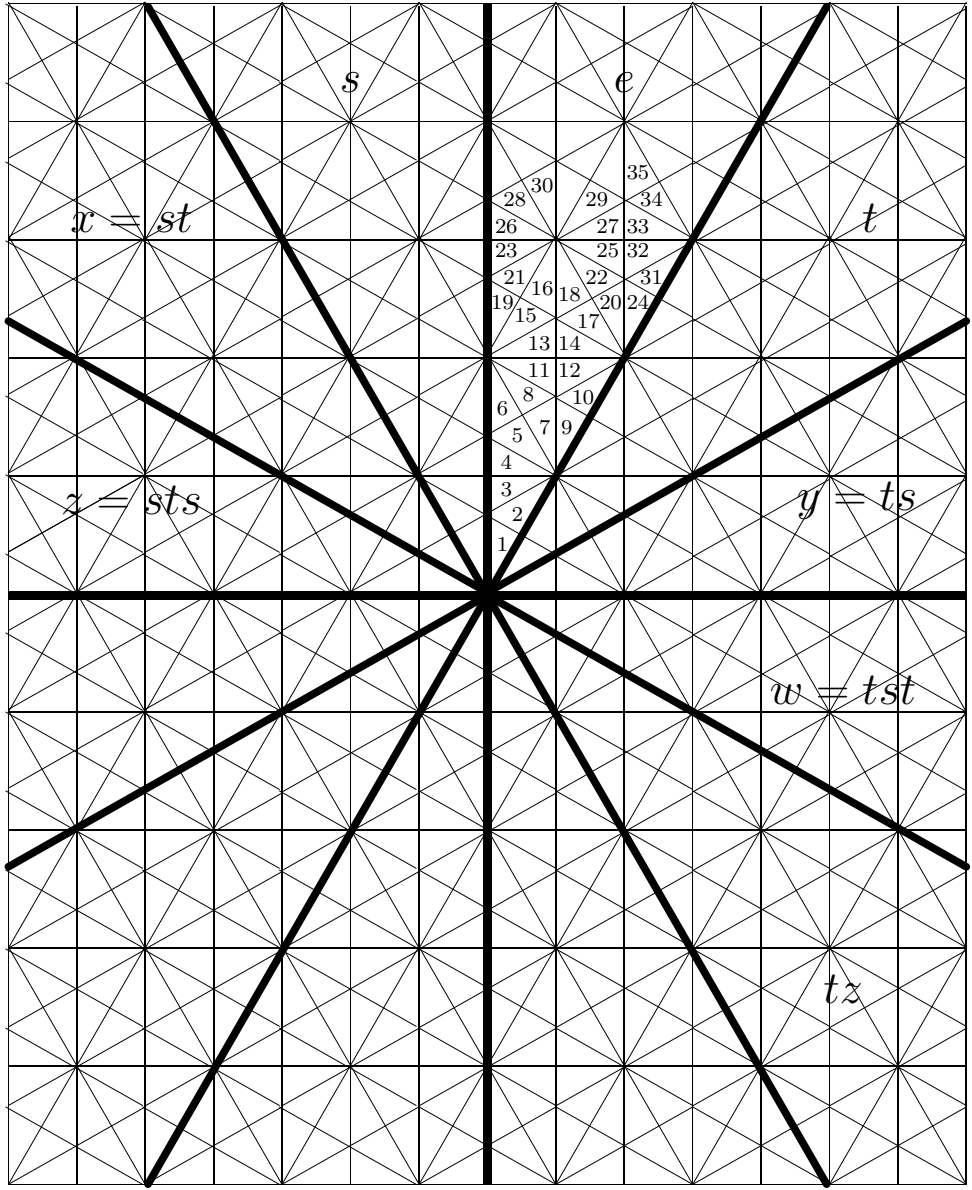

\section{Notation and preliminaries}

We shall generally use the notation from \cite{AK10}. In this section we recall the main players and we introduce some specific notations for the $G_2$-case we are dealing with.
Bits of further notations are introduced as we go along and we 
refer to loc. cit. for any unexplained notation.

In the algebraic group $G$ 
%from the introduction 
of type $G_2$
we fix a maximal torus $T$ and we let $R$ denote the corresponding root system. We choose a set of positive roots $R^+$ and 
$\alpha$ and $\beta$ with $\beta$ long denote the two simple roots in $R^+$. Then $B$ will denote the Borel subgroup corresponding to $-R^+$. The line bundles on the flag variety
$G/B$ are induced by the characters of $B$. We let $X$ denote the character group of $B$, and for $\lambda \in X$ we use the abbreviation  $H^i(\lambda)$ for the $i$-th cohomology
group of the line bundle on $G/B$ induced by $\lambda$.

Inside $X$ we have the set of dominant weights $X^+ = \{\lambda \in X \mid \langle \lambda, \gamma^{\vee} \rangle \geq 0 \text { for all } \gamma \in R^+ \}$. The Weyl group $W$
is the group generated by the reflections in the walls of $X^+$, i.e., by the two elements $s = s_\alpha$ and $t = s_\beta$. The bottom alcove
in $X^+$ is $A_1 = \{ \lambda \in X^+ \mid \langle \lambda + \rho, \gamma^{\vee} \rangle < p \text { for all } \gamma \in R^+ \}$. Here $\rho = 5 \alpha + 3 \beta$ is half the sum
of the positive roots. Note that $0 \in A_1$ because of our assumption
$p > 5$. The affine Weyl group $W_p$ is the group generated by the reflections in the $3$ walls of $A_1$. The alcoves in  $X$ are the the mirror images of $A_1$ under $W_p$ via the
dot action (i.e., the action with origin $-\rho$). 

Recall that as a consequence of the strong linkage principle \cite{A80a} all composition factors of a cohomology module $H^i(\lambda)$ have 
(dominant) 
highest weights
in $W_p \cdot \lambda$. Moreover, if for $\lambda, \mu \in \bar A_1$  we denote by $T_\lambda^\mu$ the translation functor from the $\lambda$-block to the $\mu$-block then we have
\begin{equation}
T_\lambda^\mu H^i(w \cdot \lambda ) = H^i(w \cdot \mu) \text { for all } \lambda \in F, \mu \in \bar F.
\end{equation}
Here $F$ denotes a non-0-facette of $\bar A_1$, i.e., $F$ is $A_1$ itself  or one of its walls. By $\bar F$ we denote the closure of $F$. 
We note that any $\lambda\in
X$ belonging to a 0-facette must be 
%special, i.e., 
an element of
$-\rho+pX$ by our assumption
$p>5$.

%From \refeq{??} 
By (2.1)
we immediately deduce that if a cohomology module vanishes for some weight in 
a facette (in our case an alcove or a wall) then the same cohomology
vanishes at any other weight in the closure of that facette. More generally, this translation principle allows us to determine the composition factors of the cohomology of a weight
in the closure of a facette once we have the corresponding information for a single weight in 
the facette. Therefore we abuse notation and sometimes write $H^i(F)$ for
a cohomology group of a (non-specified) weight in the facette $F$.  

We enumerate the first few alcoves in $X^+$ as in \cite{H}, see Fig. 1. The alcove containing the number $i$ is then denoted $A_i$. If $A_i$ and $A_j$ share a wall we denote this common wall
by $F_{i/j}$.  In Fig. 1 we have also given names to some of the Weyl chambers that we shall need often. We have set $x = st, y = ts, z = sy, \text { and } w= tx$.

%Enter Fig.1.

If $v$ is any element of 
$W$
%the Weyl group 
then we set $A_i^v = v \cdot A_i$ and similarly $F^v_{i/j} = v \cdot F_{i/j}$.
By the chamber
$v$ we will mean the chamber
$v\cdot X^+$.

Recall that $H^0(\lambda) \neq 0$ if and only if $\lambda \in X^+$. Moreover, Kempf's vanishing theorem says that $H^i(\lambda) = 0$ for all $i>0$ when $\lambda \in X^+$. Serre
duality implies that $H^i(\lambda)^* \simeq H^{6-i}(-\lambda -2\rho)$ for all $\lambda \in X$. 

In characteristic zero the vanishing behavior of line bundle cohomology (on any flag variety) is determined by Bott's theorem \cite{B} which says that $H^i(\lambda) \neq 0$ if and
only if $\lambda \in w \cdot X^+$ for some $w \in W$ of length $i$. By semi-continuity (or use the universal coefficient theorem in %Section ?? 
(3.2)
below) we have that the dimension of
a cohomology 
%group 
module
in characteristic $p>0$ is at least equal to the dimension of the corresponding cohomology module in characteristic $0$. Hence we have $H^i(\lambda) \neq 0$ in
all chambers $w \cdot X^+$ where $w \in W$ 
%have 
has
length $i$. In the following sections we will therefore concentrate to the ``extra" cohomology, i.e., on $H^i(\lambda)$'s or
$H^i(A)$'s for $\lambda$ or $A$ lying outside these chambers.

In \cite{A79} the first author obtained the complete vanishing description for $H^1(\lambda)$ and (via Serre duality) $H^{|R^+|-1}(\lambda)$, 
see also the alternative formulations in Corollary 2.7 and Remark 2.8 of \cite{A07}. When $\lambda$ belongs to the lowest $p^2$-alcoves we have in Fig.2 illustrated this result in our case
by entering $1$ (resp. $5$) on the alcoves
where extra $H^1$- (resp. $H^5$-)
occur.

%Enter Fig. 2.

The above remarks mean that the cohomology modules we have left to describe are $H^i(\lambda)$ with $2 \leq i \leq 4$. We deal with these after first describing the methods we use in the
next section.

\section{Methods}
In this section we will describe the methods we use to determine the vanishing or non-vanishing of a cohomology module $H^i(\lambda)$. The results may all be found in the
literature and are slight variations of the 6 step program sketched in \cite[Appendix B]{AK10}. They all apply in the case of a general flag variety although we have not always
chosen to formulate them in their most general versions. Our aim has been to gear them towards the $G_2$-problem of the present paper.

We set for each $n \geq 1$ 
\[
P_n = \{\lambda \in X \mid \langle \lambda + \rho, \gamma^{\vee}\rangle < p^n \text { for all } \gamma \in R\}.
\]
This is the union over all Weyl chambers of their lowest $p^n$-alcoves.
When $\lambda \in P_1$ the cohomology $H^\bullet(\lambda)$ behaves according to Bott's theorem in characteristic $0$, see \cite{A80a}. We shall use this to get our first results
for weights in $P_2$.

We call a weight $\nu \in X$ {\em special} if there exists $\lambda \in X$ such that $\nu = p\cdot \lambda$. Here we have extended the ``dot notation" to include also multiplication by
$p$ on $X$, i.e., $p \cdot \lambda = p(\lambda + \rho) - \rho
$.
Thus $\nu$ is special iff
$\nu\in
p\cdot(-\rho)+pX$. 
Note that $\lambda \in P_1$ if and only if $p\cdot \lambda \in P_2$. 
Also we say $\lambda$ is singular iff $\langle
\lambda + \rho, \gamma^{\vee} \rangle = 0$ for some $\gamma \in R$,
in which case $p \cdot \lambda$ is singular and vice versa. 
Otherwise we say
$\lambda$ is regular.
Likewise $p\cdot$ preserves the individual Weyl chambers.

Setting $X_p = \{\lambda \in X^+ \mid \langle \lambda, \gamma^{\vee}\rangle <p \text { for all simple roots } \gamma\}$ 
we then have the following proposition, cf. Step 1 in \cite[Appendix B]{AK10}

\begin{prop}
Let $\nu$ be a special point. Then 
\begin{itemize}
\item[i)] If $\nu \in \overline{P_2}$ 
and is singular then
$H^i(\nu) = 0$ for all $i\in \N$.

\item[ii)] If $\nu$ is regular and  $w$ is the element in $W$ for which $w\cdot \nu
\in X^+$ then $H^{l(w)}(\nu \pm \lambda) \neq 0$ for all $\lambda \in X_p$.

\end{itemize}
\end{prop}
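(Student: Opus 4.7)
For part (i), the singularity of $\nu$ gives some positive root $\gamma$ with $\langle\nu+\rho,\gamma^\vee\rangle=0$. Let $P_\gamma\supset B$ denote the associated minimal parabolic and $\pi_\gamma\colon G/B\to G/P_\gamma$ the projection; its fibres are copies of $\mathbb P^1$ on which $\mathcal L(\nu)$ restricts to the line bundle of degree $\langle\nu,\gamma^\vee\rangle=-1$. Since $\mathcal O_{\mathbb P^1}(-1)$ has no cohomology, $R^q\pi_{\gamma*}\mathcal L(\nu)=0$ for every $q$, and the Leray spectral sequence then yields $H^i(\nu)=0$ for every $i$. The hypothesis $\nu\in\overline{P_2}$ is not actually used here; it only situates the statement in our running range.

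For part (ii), I write $\nu=p\cdot\mu$ for some $\mu\in X$. The dilation $p\cdot$ commutes with the dot action of $W$, so $w\cdot\nu\in X^+$ is equivalent to $w\cdot\mu\in X^+$; the characteristic-zero Bott theorem combined with semicontinuity (recalled in Section 2) already yields $H^{l(w)}(\mu)\neq 0$. The plan is to propagate this nonvanishing to $\nu\pm\lambda$ via the Frobenius-kernel decomposition of $G/B$. For $\nu-\lambda=p\mu+\sigma$ with $\sigma:=(p-1)\rho-\lambda$, the restriction $\lambda\in X_p$ forces $\sigma\in X_p$ as well. Considering the quotient morphism $\pi\colon G/B\to G^{(1)}/B^{(1)}$ modulo $G_1B$, Kempf vanishing on the infinitesimal flag variety $G_1/B_1$ gives $R^q\pi_*\mathcal L(\sigma)=0$ for $q>0$, so Leray degenerates and the projection formula yields
\begin{equation*}
H^i(\nu-\lambda)\;\cong\;H^i\bigl(G^{(1)}/B^{(1)},\;\pi_*\mathcal L(\sigma)\otimes\mathcal L^{(1)}(\mu)\bigr).
\end{equation*}
The $G_1B$-module $\pi_*\mathcal L(\sigma)\cong\ind_{B_1}^{G_1}(\sigma)$ has the simple $G$-module $L(\sigma)$ as $G_1B$-socle; for $\sigma=(p-1)\rho$ this collapses to the classical Steinberg tensor identity $H^i(p\cdot\mu)\cong H^i(\mu)^{(1)}\otimes\St_1$, which already gives $H^{l(w)}(\nu)\neq 0$. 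For a general $\sigma\in X_p$, the Grothendieck spectral sequence attached to the composition series of $\pi_*\mathcal L(\sigma)$ feeds a contribution $L(\sigma)\otimes H^{l(w)}(\mu)^{(1)}$ into $H^{l(w)}(\nu-\lambda)$, and this contribution survives because $H^\bullet(\mu)$ is concentrated in the single degree $l(w)$, leaving no room for cancellation.

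The case $\nu+\lambda$ reduces to the previous one by Serre duality: $H^{l(w)}(\nu+\lambda)^*\cong H^{6-l(w)}(-\nu-\lambda-2\rho)$. A direct calculation shows $-\nu-\lambda-2\rho=p\cdot\mu^*-\lambda$ with $\mu^*:=-\mu-2\rho$, so the right-hand side is $\nu^*-\lambda$ for $\nu^*:=p\cdot\mu^*$ regular; the element $w_0w\in W$, of length $6-l(w)$, is precisely the one carrying $\nu^*$ into $X^+$, so applying the argument above to $\nu^*-\lambda$ gives $H^{6-l(w)}(\nu^*-\lambda)\neq 0$ and hence $H^{l(w)}(\nu+\lambda)\neq 0$. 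The main obstacle is the spectral-sequence analysis in the $\nu-\lambda$ step: one must trace the $L(\sigma)$-socle contribution through the Grothendieck filtration of $\pi_*\mathcal L(\sigma)\otimes\mathcal L^{(1)}(\mu)$ and verify, using the single-degree concentration of $H^\bullet(\mu)$, that it genuinely survives in $H^{l(w)}(\nu-\lambda)$ without being killed by higher filtration factors.
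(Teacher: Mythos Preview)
Your argument for part (i) has a genuine gap. You take a positive root $\gamma$ with $\langle\nu+\rho,\gamma^\vee\rangle=0$ and invoke the $\mathbb P^1$-fibration $G/B\to G/P_\gamma$, but the minimal parabolic $P_\gamma$ exists only for \emph{simple} $\gamma$, and singularity does not force this. Your consequent claim that the hypothesis $\nu\in\overline{P_2}$ is unneeded is in fact false: in characteristic $p$ there are singular weights with non-vanishing cohomology --- this paper itself exhibits $H^4\neq 0$ on the walls $F_{A_3^w/A_3^{tz}}$ and $F_{A_4^w/A_4^{tz}}$ (see \S4.1), which lie on the chamber wall between $w\cdot X^+$ and $tz\cdot X^+$ and hence consist of weights singular for the non-simple root $w(\alpha)=2\alpha+\beta$. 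The paper's proof is different: it uses the Steinberg tensor identity $H^i(p\cdot\mu)\simeq H^i(\mu)^{(1)}\otimes St_p$ to reduce to showing $H^i(\mu)=0$ for the singular $\mu$ with $\nu=p\cdot\mu$; the hypothesis $\nu\in\overline{P_2}$ is exactly what places $\mu$ in $\overline{P_1}$, where cohomology is known to behave as in characteristic zero.

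Your argument for part (ii) has a parallel gap. You assert that $H^\bullet(\mu)$ is concentrated in the single degree $l(w)$ and use this to rule out cancellation of the socle contribution $L(\sigma)\otimes H^{l(w)}(\mu)^{(1)}$ in your spectral sequence. But part (ii) carries no $\overline{P_2}$-restriction on $\nu$, so $\mu$ is an arbitrary regular weight, and in positive characteristic such weights can have cohomology in several degrees --- indeed, that ``extra cohomology'' is precisely what this paper investigates. Without concentration there is nothing preventing the other $G_1B$-composition factors of $\ind_B^{G_1B}(\sigma)$ from producing terms that kill your contribution. Your Serre-duality reduction of the $\nu+\lambda$ case to the $\nu-\lambda$ case is correct but inherits the same gap. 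The paper defers to \cite{A07} for the actual argument, which does not rely on any concentration hypothesis.
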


\begin{proof}
i) is a consequence of the isomorphism $H^i(p\cdot \lambda) \simeq H^i(\lambda)^{(1)} \otimes St_p$, see \cite[Theorem 2.5]{A80b} (valid for all $\lambda \in X,  i\in \N$). 
Here $St_p =
H^0((p-1)\rho)$ is the Steinberg module. Moreover, ii) can also be deduced from this result, see \cite[Corollary 2.7]{A07}.
\end{proof}

\noindent
{\bf Remark 3.1.}
More generally,
if $\lambda\in X$ with
$H^i(\lambda)\ne0$, then
$\forall\mu\in p^r\cdot\lambda\pm X_{p^r}$ with
$r\in\N$,
$H^i(\mu)\ne0$.

%\end{remark}

\begin{proof}
If $\mu\in p^r\cdot\lambda- X_{p^r}$, this is \cite[Prop.2.6]{A07}.
If $\mu\in p^r\cdot\lambda+ X_{p^r}$, we argue as in
\cite[2.7]{A07}:
by the Serre-duality
$H^{N-i}(-\lambda-2\rho)\ne0$.
Then
$\forall\eta\in X_{p^r}$,
$0\ne
H^{N-i}(p^r\cdot(-\lambda-2\rho)-\eta)$.
It follows from the Serre-duality
again that
\begin{align*}
0
&\ne
H^{i}(-(p^r\cdot(-\lambda-2\rho)-\eta)-2\rho)
=
H^{i}(p^r\cdot\lambda+\eta).
\end{align*}
\end{proof}

We shall also need the following proposition obtained by the translation functors mentioned in Section2 . 
Let
$A$ be an alcove, $s$ the reflection in one of the walls of $A$  and denote by
$\theta_s$ the corresponding wall crossing functor. We will denote by $As$ the alcove adjacent to $A$ over the $s$-wall.
If $As > A$ then we have
the long exact sequence
\cite[Proposition 2.1]{A81}
\begin{equation}
\cdots \to H^i(A) \rightarrow \theta_s H^i(A) \rightarrow H^i(As) \rightarrow \cdots.
\end{equation}
%In fact, 
Then part i) of the following  is 
(2.1).
%\refeq{??} 

\begin{prop}
\begin{itemize}
\item[i)] Let $F \subset X$ be a facette.
{If } $H^i(\lambda) = 0$  { for some } $\lambda \in F $ { then } 
$H^i(\mu) = 0$ for all $\mu \in \bar F$.
\item[ii)] Let $s$ be the reflection in a wall of an alcove $A$ and suppose $As > A$. Assume $H^{i+1}(A) = 0$.
\begin{itemize}
\item[a)] If $H^i(A) = 0$ then also $H^i(As) = 0$;
\item[b)] If $F$ denotes the common wall of $A$ and $As$ then $H^i(F)\neq 0$ if and only if $H^i(A) \neq 0 \neq H^i(As)$.
\end{itemize}
\item[iii)]
Suppose for some alcove $A$ we have $H^{i+1}(A')= 0$ for all alcoves which 
are obtained from $A$ by a sequence
of alcoves $A=A_1 < A_2< \cdots< A_r = A'$ with $A_{i+1} = A_is_i$ for some reflection $s_i$ in a wall of $A_i$, $i= 1, 2, \cdots , r-1$. If $H^i(A) = 0$ then also $H^i(A')= 0$ 
for all such $A'$.

\end{itemize}
\end{prop}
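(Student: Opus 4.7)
The plan is as follows. Part (i) is an immediate consequence of (2.1): the translation functor carrying a weight in $F$ to a weight in $\bar F$ sends zero to zero, so if $H^i(\lambda)=0$ for some $\lambda\in F$ then $H^i(\mu)=0$ for every $\mu\in\bar F$.

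For part (ii)(a) I would apply the long exact sequence (3.2). The hypothesis $H^{i+1}(A)=0$ truncates the relevant piece to
\[
H^i(A) \to \theta_s H^i(A) \to H^i(As) \to 0.
\]
If moreover $H^i(A)=0$, then $\theta_s H^i(A)=\theta_s(0)=0$, and exactness forces $H^i(As)=0$.

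Part (ii)(b) rests on the translation identifications $H^i(F)=T_\lambda^\mu H^i(A)=T_{\lambda'}^\mu H^i(As)$ (from (2.1), with $\lambda\in A$, $\lambda'\in As$, $\mu\in F$) and the resulting identity $\theta_s H^i(A)=T_\mu^\lambda H^i(F)$. The forward implication is immediate, since $H^i(A)=0$ or $H^i(As)=0$ would translate to $H^i(F)=0$. For the converse I argue by contrapositive: if $H^i(F)=0$ then $\theta_s H^i(A)=T_\mu^\lambda(0)=0$, and the truncated sequence of (ii)(a) collapses to $H^i(As)=0$, contradicting $H^i(A)\ne 0\ne H^i(As)$.

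Part (iii) I plan to prove by induction on the chain length $r$, the base $r=1$ being the hypothesis $H^i(A)=0$. For the step, given $A=A_1<\cdots<A_r=A'$ with $A_{k+1}=A_k s_k$, the inductive hypothesis applied to the subchain ending at $A_{r-1}$ yields $H^i(A_{r-1})=0$, while $H^{i+1}(A_{r-1})=0$ follows from the standing vanishing assumption (since $A_{r-1}$ itself is obtainable from $A$ by a valid chain); then (ii)(a) applied to $A_{r-1}$ with $s_{r-1}$ gives $H^i(A')=0$. There is no real obstacle beyond (2.1) and (3.2); the only thing to notice is that the formula $\theta_s H^i(A)=T_\mu^\lambda H^i(F)$ turns $H^i(F)$ into a two-way bridge between $H^i(A)$ and $H^i(As)$, which is precisely the content of (ii)(b).
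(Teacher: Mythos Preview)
Your proof is correct and follows essentially the same route as the paper: part (i) from (2.1), parts (ii)(a) and (ii)(b) from the surjection in the long exact sequence (labeled (3.1) in the paper, not (3.2)) together with the factorization $\theta_s = T_\mu^\lambda \circ T_\lambda^\mu$, and part (iii) by iterating (ii)(a). The only cosmetic difference is that you phrase (ii)(b) via contrapositives, whereas the paper argues the ``if'' part directly from the surjection $T_\mu^\lambda H^i(F) \twoheadrightarrow H^i(As)$.
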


\begin{proof}
To prove ii) we observe that our assumption here makes the last displayed map  in (3.1) a surjection. This gives immediately part a). To obtain part b) we observe 
$\theta_s = T_\mu^\lambda \circ T_\lambda^\mu$ with $\lambda \in A$ and $\mu \in F$.
%; $F$ has been specified in the statement b).
% where we denoteby $F$ 
%denote the common wall of $A$ and $As$. 
We have 
$T_\lambda^\mu H^i(A) \simeq H^i(F)
\simeq
T_\lambda^\mu H^i(As)$ 
which 
%together with i) 
gives the ``only if" part. The ``if" part follows from the
surjection $\theta_s H^i(A) \simeq T_\mu^\lambda H^i(F) \to H^i(As)$. Finally iii) is obtained by a simple iteration of iia).
\end{proof}

Exploring the translation functors further one obtains (cf. Step 3 in \cite[Appendix B]{AK10}

\begin{prop} Let $\nu \in X$ be a special point and denote by $W_\nu$ the stabilizer of $\nu$ in $W_p$. Assume $H^j(\nu) = 0$ for some $j \in \N$.
\begin{itemize}
\item[i)]
If
$H^{j+1}(A) = 0$ for all alcoves $A$ with $\nu \in \overline{A}$,
%except possibly for $A^+_\nu$, 
then $H^j(\nu+\rho+A_1) = 0$.

\item[ii)] If in addition to the assumptions in i) we have $H^{j+1}(\nu + \lambda)= 0$ for all $\lambda \in X^+$ 
then also $H^j(\nu + \lambda) = 0$ for all $\lambda \in X^+$. 
%This follows by combining i) with Step 2, see also Theorem 4.1 in \cite{A81}.
\item[iii)]
Let $F$ be a facette with $\nu \in \bar F$. If $H^{j+1}(w \cdot F) = 0$ for all $w \in W_\nu$ and $F^+$ is maximal in $\{w\cdot F\}_{w \in W_\nu}$, then $H^j(F^+) = 0$.
\end{itemize}
\end{prop}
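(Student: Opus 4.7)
The plan is to propagate the vanishing $H^j(\nu)=0$ outward from the special point $\nu$ via translation functors and the wall-crossing long exact sequence (3.1), the $H^{j+1}$-hypothesis being used to make the exact sequences degenerate at each step.

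For i), I would set $A=\nu+\rho+A_1$ and pick a regular weight $\mu\in A$. Since $\nu$ is a vertex of $A$, two walls of $A$ meet at $\nu$; pick one, with reflection $s$, and let $A'=As$. Because $A$ corresponds to the $\rho$-shift of $A_1$ above $\nu$, we have $A>A'$, and $A'$ also has $\nu$ in its closure. Applying (3.1) to the pair $(A',A)$ and using $H^{j+1}(A')=0$ yields a surjection $\theta_s H^j(A')\twoheadrightarrow H^j(A)$. Writing $\theta_s=T_\mu^{\lambda_F}\circ T_{\lambda_F}^\mu$ (for $\lambda_F\in F$ the common wall) and using (2.1), this becomes $T_\mu^{\lambda_F} H^j(F)\twoheadrightarrow H^j(A)$, so it is enough to see $H^j(F)=0$. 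For the latter I would iterate the same construction across the second wall of $A$ through $\nu$, and more generally across the $|W_\nu|$ alcoves meeting at $\nu$: each application of (3.1) degenerates (thanks to the $H^{j+1}$-hypothesis) into a surjection, and the composition of all such wall-crossings ultimately exhibits $H^j(A)$ as a quotient of a translate of $H^j(\nu)$, which vanishes by hypothesis.

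Part ii) then drops out of i) together with Proposition 3.2 iii). Every alcove in the region $\nu+X^+$ is joined to $A=\nu+\rho+A_1$ by a chain of upward wall-crossings, and the assumption $H^{j+1}(\nu+\lambda)=0$ for $\lambda\in X^+$ supplies the $H^{j+1}$-vanishing along the chain; Proposition 3.2 iii) then propagates the vanishing $H^j(A)=0$ obtained in i) to every such alcove, giving $H^j(\nu+\lambda)=0$ for all $\lambda\in X^+$. For iii), I would adapt i) to a chain of facettes in the $W_\nu$-orbit of $F$ terminating at the maximal element $F^+$; the hypothesis $H^{j+1}(wF)=0$ for $w\in W_\nu$, combined with Proposition 3.2 i) to produce $H^{j+1}$-vanishing on the adjacent alcoves, again makes the long exact sequences degenerate and yields $H^j(F^+)=0$.

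The principal obstacle I anticipate is in part i): propagating through a vertex (rather than across a single wall) requires iterating (3.1) $|W_\nu|$ times, and the bookkeeping needed to ensure that the composition of surjections at each step genuinely reduces $H^j(A)$ to a translate of $H^j(\nu)=0$, with no residual contributions from intermediate alcoves, is the main technical check. Once that is done for the one alcove $A=\nu+\rho+A_1$, parts ii) and iii) follow by the more routine arguments sketched above.
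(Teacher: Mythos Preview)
Your treatment of ii) is fine and agrees with the paper: once i) is in hand, Proposition~3.2.iii) pushes the vanishing from $\nu+\rho+A_1$ to all of $\nu+X^+$.

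For i) and iii), however, your proposed iteration of wall-crossings has a genuine gap, and it is not merely a bookkeeping issue. The functor $\theta_s=T_F^\mu\circ T_\mu^F$ factors through the wall $F$ between the two alcoves; it never touches the vertex $\nu$. Concretely, the surjection you extract from (3.1) is $\theta_s H^j(A')=T_F^\mu H^j(F)\twoheadrightarrow H^j(A)$, so what you actually need is $H^j(F)=0$. But $F$ is a $1$-dimensional facette with $\nu$ only in its closure, and Proposition~3.2.i) goes the wrong way (vanishing propagates from a facette \emph{down} to its closure, not up). No matter how many $\theta_s$'s you compose, you stay in the regular block and never see $H^j(\nu)$; the hypothesis $H^j(\nu)=0$ is simply not engaged by the sequence (3.1). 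The sentence ``the composition of all such wall-crossings ultimately exhibits $H^j(A)$ as a quotient of a translate of $H^j(\nu)$'' is the assertion that needs proof, and it does not follow from iterating (3.1).

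The paper proceeds in the opposite logical order: it proves iii) first, by translating directly from the $\nu$-block to the $F$-block via $T_\nu^{\lambda_F}$ and invoking \cite[Theorem~4.3]{A81}, which describes how $T_\nu^{\lambda_F}$ acts on cohomology in terms of the $H^j(w\cdot F)$ for $w\in W_\nu$; the $H^{j+1}$-hypothesis makes that description collapse to give $H^j(F^+)=0$. Part i) is then literally the special case of iii) where $F$ is an alcove. The missing ingredient in your argument is precisely this ``through the vertex'' translation $T_\nu^{\lambda_F}$ (the analogue of $\theta_s$ where the intermediate facette has stabiliser all of $W_\nu\simeq W$, not just a single reflection), together with the structural result \cite[Theorem~4.3]{A81} governing it.
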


\begin{proof}
%i) 
Note that
$W_\nu\simeq
W$.
Consider first iii).
By translating from the $\nu$-block to the $F$-block we obtain $H^j(F^+) = 0$, 
%where $F^+$  is maximal among the facettes $W_\nu$-conjugated to $F$, 
see 
%again 
\cite[Theorem
4.3]{A81}. Then i) is the special case where $F$ is an alcove. Finally 
%If $F$ is a wall the vanishing of $H^j$ in the alcove $\nu + \rho +A_1$ follows from the long exact sequence \citeref{??}
%with $s$ corresponding to $F^+$.
ii) follows from i) and Proposition 3.2.iii).
\end{proof}
 Let $\Z_p$ be $\Z$ localized at the prime $p$ and denote for $\lambda \in X$ and $i\in\N$ by
$H_{\Z_p}^i(\lambda)$ the analogue of $H^i(\lambda)$ over $\Z_p$. Then by ``the universal coefficients theorem"
we get the short exact sequence
\begin{equation}
0 \to H_{\Z_p}^i(\lambda) \otimes k \to H^i(\lambda) \to
\Tor_1^{\Z_p}(H^{i+1}_{\Z_p}(\lambda), k) \to 0.
\end{equation}
This gives us the following result

\begin{prop}
Suppose $\lambda \in X$ does not belong 
to
a chamber $w\cdot X^+$ where $w \in W$ has
length $i+1$. If
$H^{i+2}(\lambda) = 0$ and $H^{i+1}(\lambda) \neq 0$ then also $H^i(\lambda) \neq 0$.
\end{prop}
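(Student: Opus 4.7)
The plan is to chase the universal coefficient short exact sequence (3.2) at the three consecutive degrees $i$, $i+1$, $i+2$. Two inputs feed this chase: the integral cohomology $H^{i+1}_{\Z_p}(\lambda)$ must be torsion, and the integral cohomology $H^{i+2}_{\Z_p}(\lambda)$ must vanish.

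For the torsion observation, I would invoke Bott's theorem in characteristic $0$: because $\lambda$ does not lie in any chamber $w \cdot X^+$ with $\ell(w) = i+1$, the rational cohomology $H^{i+1}_{\Z_p}(\lambda)\otimes_{\Z_p}\Q$ vanishes. Since $H^{i+1}_{\Z_p}(\lambda)$ is finitely generated over the PID $\Z_p$, vanishing of its rational fibre forces it to be a finite abelian $p$-group.

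Next I would feed the hypothesis $H^{i+2}(\lambda)=0$ into (3.2) at degree $i+2$, yielding $H^{i+2}_{\Z_p}(\lambda)\otimes k = 0$. As $H^{i+2}_{\Z_p}(\lambda)$ is finitely generated over the local ring $\Z_p$, Nakayama's lemma gives $H^{i+2}_{\Z_p}(\lambda)=0$, and hence $\Tor^{\Z_p}_1(H^{i+2}_{\Z_p}(\lambda),k)=0$. Using (3.2) at degree $i+1$ now simplifies to an isomorphism $H^{i+1}_{\Z_p}(\lambda)\otimes k \simeq H^{i+1}(\lambda)$, which is nonzero by hypothesis. Therefore $H^{i+1}_{\Z_p}(\lambda)$ is a \emph{nonzero} finite $p$-group.

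Finally I would compute $\Tor^{\Z_p}_1(H^{i+1}_{\Z_p}(\lambda),k)$ from the resolution $0\to\Z_p\xrightarrow{p}\Z_p\to k\to 0$, getting $H^{i+1}_{\Z_p}(\lambda)[p]$, the $p$-torsion submodule. Since $H^{i+1}_{\Z_p}(\lambda)$ is a nonzero torsion $\Z_p$-module, this $p$-torsion part is nonzero. Reading (3.2) at degree $i$ once more, $H^i(\lambda)$ surjects onto a nonzero module, so $H^i(\lambda)\neq 0$. The only slightly delicate step is the Nakayama application, which requires the (well-known) finite generation of $H^{i+2}_{\Z_p}(\lambda)$ over $\Z_p$; everything else is a direct chase of the three instances of the universal coefficient sequence.
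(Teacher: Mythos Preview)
Your proof is correct and is precisely the argument the paper has in mind: it chases the universal coefficient sequence (3.2) in degrees $i$, $i+1$, $i+2$, using Bott's theorem in characteristic $0$ to see that $H^{i+1}_{\Z_p}(\lambda)$ is torsion and Nakayama to kill $H^{i+2}_{\Z_p}(\lambda)$. The paper defers to \cite[Appendix B, Step 4 ii)]{AK10} for exactly this ``easy proof'', so you have simply written it out in full.
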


\begin{proof} This is Step 4 ii) in \cite[Appendix B]{AK10} which also contains 
%the 
the easy proof.
\end{proof}

The following proposition 
is
a combination of Step 5 and 6 in \cite[Appendix B]{AK10}. 
%$\alpha$ denotes an arbitrary simple root (forgetting for a moment our convention that $\alpha$ is the short simple root) and 
\begin{prop}
Let
$\gamma$ be an arbitrary simple root with the associated reflection
$s_\gamma$,
and let
$\lambda \in X$  
also
be 
arbitrary, i.e., not necessarily in $P_2$.
\begin{itemize}
\item[i)]
If  $0 \leq \langle \lambda + \rho, \gamma^\vee \rangle=sp^m$
% \text { and
%either } 
%\langle \lambda + \rho, \alpha^\vee \rangle~<~p \\ \text { or } 
%p \mid \langle\lambda+\rho,\alpha^\vee\rangle \text { then }
for some
$s, m\in\N$
with
$s<p$,
then
$H^{i+1}(s_\gamma\cdot~\lambda) \simeq H^i(\lambda)$.

\item[ii)]
If  $ap < \langle \lambda + \rho, \gamma^\vee \rangle < (a+1)p$  
for some 
$0<a<p$ then we have the following two long exact sequences 
\[
\cdots \to H^{i+1}(s_\gamma \cdot \lambda) \to H^i(\lambda) \to H^{i+1}(V) \to
\cdots 
\]
and
\[
\cdots \to H^{i+1}(C) \to H^{i+1}(V) \to H^i(Q) \to \cdots 
\] 
where $C$ and $Q $ both have weights
$s_\gamma\cdot\lambda +p\gamma,
s_\gamma\cdot\lambda +2p\gamma, \cdots , s_\gamma\cdot ~\lambda +ap\gamma$, each occurring with multiplicity $1$.
\item[iii)]
Suppose $\lambda$ satisfies the inequalities in ii). If $H^{i+1}(s_\gamma \cdot \lambda)$ has a composition factor $L(\mu)$ which is not a composition 
factor of any of the modules $H^j(s_\gamma \cdot \lambda + rp\gamma)$ for $j \in \{i, i-1\}$ and $1 \leq r \leq a$ then $L(\mu)$ must 
be a composition factor of $H^i(\lambda)$ forcing this module to be non-zero.

\end{itemize}
\end{prop}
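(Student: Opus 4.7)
The plan is to work with the minimal parabolic $P_\gamma \supset B$ corresponding to the simple reflection $s_\gamma$, and exploit the $\mathbb{P}^1$-fibration $\pi: G/B \to G/P_\gamma$. For any $\mu \in X$ the Leray spectral sequence
\[
E_2^{i,j} = H^i(G/P_\gamma, R^j\pi_*\mathcal{L}(\mu)) \Rightarrow H^{i+j}(\mu)
\]
reduces the comparison of $H^\bullet(\lambda)$ and $H^\bullet(s_\gamma\cdot\lambda)$ to an analysis of the induced $P_\gamma$-modules $R^j\pi_*\mathcal{L}(\mu)$, which are computable fiberwise on $\mathbb{P}^1$ (an $SL_2$-level question). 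Throughout, write $n = \langle \lambda+\rho,\gamma^\vee\rangle$.

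For part i), the case $s = 0$ is immediate: then $s_\gamma\cdot\lambda = \lambda$ and the fiberwise line bundle is $\mathcal{O}_{\mathbb{P}^1}(-1)$, forcing $R^j\pi_*\mathcal{L}(\lambda) = 0$ for all $j$, so both sides vanish. For $s > 0, m \geq 1$, I would argue by induction on $m$. The base case is the standard $SL_2$-computation when $n = s$ and $0 < s < p$: here $R^0\pi_*\mathcal{L}(\lambda)$ is irreducible and the spectral sequence degenerates. For the inductive step, Steinberg's tensor product theorem factors $\operatorname{Ind}_B^{P_\gamma}(\lambda)$ through a Frobenius twist, and combining this with the known isomorphism $H^i(p\cdot\mu) \simeq H^i(\mu)^{(1)} \otimes \operatorname{St}_p$ (already used in Proposition~3.1) converts the $p^m$ case to the $p^{m-1}$ case and finally to the $m=0$ case. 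The resulting isomorphism $R^0\pi_*\mathcal{L}(\lambda) \cong R^1\pi_*\mathcal{L}(s_\gamma\cdot\lambda)$ up to the relevant twist yields the desired degree-shift by $1$ in cohomology.

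For part ii), I would construct a filtration of the $P_\gamma$-module $\operatorname{Ind}_B^{P_\gamma}(\lambda)$ when $ap < n < (a+1)p$. Writing $n = ap + r$ with $0 < r < p$, the $SL_2$-theory gives a natural submodule built from the $a$ ``Frobenius pieces'' with weights shifted by $p\gamma,2p\gamma,\dots,ap\gamma$, with quotient related to the small Weyl module $L(r-1)^{(0)}$. Defining $V$ as the $R^0\pi_*$ of the appropriate piece and $C,Q$ as the successive subquotients of the Frobenius filtration, the short exact sequences on $G/P_\gamma$ push through the Leray spectral sequence to give the two long exact sequences claimed. Then iii) follows by a direct composition-factor chase: under the stated hypothesis, $L(\mu)$ cannot be absorbed by the image of $H^{i+1}(V)$ (whose composition factors are controlled by the second long exact sequence in terms of $H^\bullet(s_\gamma\cdot\lambda + rp\gamma)$), so it must lift to a composition factor of $H^i(\lambda)$ along the connecting map of the first sequence.

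The main obstacle is the bookkeeping in part ii): carefully identifying the modules $V$, $C$, and $Q$ via the correct filtration of $\operatorname{Ind}_B^{P_\gamma}(\lambda)$, ensuring each successive quotient really carries the weights $s_\gamma\cdot\lambda + jp\gamma$ with multiplicity one, and tracking the induced maps on $G/P_\gamma$ cohomology through the Leray spectral sequence. Parts i) and iii) are then respectively a Frobenius/Steinberg reduction and a diagram chase, both considerably less delicate than setting up the filtration correctly.
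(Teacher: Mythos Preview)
Your overall strategy is exactly the one behind the reference the paper cites: the paper's own proof of i) and ii) consists of the single sentence ``i) and ii) are parts of the ingredients in the proof of the strong linkage principle \cite{A80a}. iii) is an immediate consequence of ii).'' Your $P_\gamma$-fibration/Leray setup and the $B$-filtration of $\Ind_B^{P_\gamma}(\lambda)$ are precisely how those ingredients are obtained in \cite{A80a}, and your chase for iii) is what the paper means by ``immediate consequence.''

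One correction to your argument for i). The global isomorphism $H^i(p\cdot\mu)\simeq H^i(\mu)^{(1)}\otimes\St_p$ from Proposition~3.1 is not the right reduction tool here: $\lambda$ need not be of the form $p\cdot\mu$ --- only $\langle\lambda+\rho,\gamma^\vee\rangle$ is assumed to have the special shape $sp^m$, while the other coordinate of $\lambda$ is arbitrary. So the induction on $m$ as you describe it does not literally run. The clean argument stays entirely on the fiber: for the Levi $SL_2$, Steinberg's tensor product theorem shows that the dual Weyl module of highest weight $sp^m-1$ is simple (its dimension $sp^m$ matches $\dim L(sp^m-1)=s\cdot p^m$), hence self-dual. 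This gives directly an isomorphism of $P_\gamma$-modules $H^0(P_\gamma/B,\lambda)\cong H^1(P_\gamma/B,s_\gamma\cdot\lambda)$, i.e.\ $R^0\pi_*\mathcal L(\lambda)\cong R^1\pi_*\mathcal L(s_\gamma\cdot\lambda)$ as bundles on $G/P_\gamma$, and Leray then yields $H^i(\lambda)\simeq H^{i+1}(s_\gamma\cdot\lambda)$ in one step with no induction needed. Your sketch for ii) and iii) is fine.
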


\begin{proof}
i) and ii) are parts of the ingredients in the proof of the strong linkage principle \cite{A80a}. iii) is an immediate consequence of ii).
\end{proof}

\section{$H^4$}

\subsection{$H^4$ in the chambers $t, y$ and $w$}

First we apply Proposition 3.1. For each special point $\nu$ in the chamber $tz$ we have $H^4(\nu + \lambda) \neq 0$ for all $\lambda \in X_p$. This gives non-vanishing $H^4$ in the alcoves marked
$4$ in Fig. 3. Proposition 3.2.iib) then implies that $H^4$ is also non-zero on the 
walls
between these alcoves 
%and on the facettes between them 
(we see from Figure 2 that the necessary $H^5$-vanishing needed to apply Proposition 3.2.iib) is indeed satisfied).
By Proposition 3.2.iii) we get also non-vanishing $H^4$ in the alcoves marked $\circ$,
and via Proposition 3.2.iib) also on the 
%facettes 
walls between them.

By Proposition 3.1.i) we have vanishing cohomology at all special points on the wall between chamber $w$ and chamber $t
z$. Proposition 3.3.i) then gives vanishing $H^4$ in the
alcoves marked $\times$ in
Fig. 3 (for alcove $A_{11}^w$ we apply 
%Proposition 3.3.i) with $F$ equals to its shortest wall, for theother alcoves we may choose $F$ equal 
Proposition 3.3.iii) with $F$ equal
 to its longest wall and then apply Proposition 3.2.ii). For the
 other alcoves we 
 %may 
 apply Proposition 3.2.i)  to the alcove itself). Note that the short wall does not have a special point in its closure!

Now  Proposition 3.2.iii) allows us to deduce $H^4$-vanishing in all alcoves above the alcoves marked $\times$ (we say that an alcove $A'$ is above the alcove
$A$ iff there is a sequence of adjacent alcoves
$A<As_1<As_1s_2<\dots<As_1s_2\dots
s_r=A'$ as in Proposition 3.2.iii)).

Proposition 3.4 gives $H^4(A_4^w) \neq 0$ because $H^5(A^w_{4}) \neq 0$. 
%\refeq{??} 
The sequence 
(3.1)
gives us an exact sequence
\[ H^4(A_3^w) \to H^5(A_4^w) \to \theta_sH^5(A_4^w). \]
Here the last term vanishes because $H^5$ vanishes on the mid-sized wall (the $s$-wall) of $A_4^w$. On the other hand, the middle term is non-zero so that $H^4(A^w_3) \neq 0$. 
An analogous argument gives $H^4(A_5^w) \neq 0$ and then by Proposition 3.2.iia) also $H^4(A_6^w) \neq 0$.

If
$A=A_3^{tz}$
%(resp. 
and $A'=A_6^{tz}$,
%) is the alcove adjacent to $A^w_3$ (resp. $A_6^w$)separated by the shortest (resp. longest)wall, 
then
$H^4(F_{A^w_3/A})\ne0\ne
H^4(F_{A^w_6/A'})$ by Proposition 3.2.iib).

Consider now $F_{6/8}^w$. The sequences in Proposition 3.5.ii) are in this case
\[ 
%H^5(F_{6/8}^{sw}) \to 
H^4(F_{6/8}^w) \to H^5(V) \to H^6(F_{6/8}^{s
w}) \]
and
\[H^5 (C) \to H^5(V) \to H^4(Q) \to H^6(C). \]
The last term in the first sequence vanishes. In the second sequence the weights of $Q$ and $C$ are $F_{6/8}^{sw} + p\alpha$ and $ F_{6/8}^{sw} + 2p\alpha$. We have 
$H^\bullet(F_{6/8}^{sw} + p\alpha) = 0$ by Proposition 3.5.i) whereas $H^5(F_{6/8}^{s w} + 2p\alpha) = 0=H^6(F_{6/8}^{s w} + 2p\alpha)$ by the results in Section 2. 
It follows that $H^5(V) \simeq H^4(Q) \simeq H^4(F_{6/8}^{s w} + 2p\alpha) \neq
0$ and hence $H^4(F_{6/8}^w) \neq 0$. Then by Proposition 3.2.i) we have also $H^4(A_8^w) \neq 0$.

Applying the same method we get $H^4(F_{5/7}^{w}) = 0$. The difference here is that $F_{5/7}^{s w} +2p\alpha = F_{3/4}^w$ has vanishing $H^4$. In fact, weights on the wall $F_{3/4}$ are
minimal in $X^+$ (with respect to the strong linkage relation) so that  $H^\bullet(F^v_{3/4})$,
$v\in W$, behaves as in characteristic $0$, see \cite{A80a}. Moreover, $H^5(F^{s
w}_{5/7}) = 0$ and the sequences corresponding
to the above imply the claimed vanishing of  $H^4(F_{5/7}^w)$. Hence by Proposition 3.2.iib) we get $H^4(A_7^w) = 0$.

%Let $A$ be the alcove adjacent to $A_4^w$ over the shortest wall.
If $A=A_4^{tz}$, there are exact sequences
\[
H^5(F^{s}_{A_4^w/A})\to
H^4(F_{A_4^w/A})\to
H^5(V)\to
H^6(F^{s}_{A_4^w/A})
\]
and
\[
H^3(Q)\to
H^5(C)\to
H^5(V)\to
H^4(Q)
\]
with
$C=Q=F^{s}_{A_4^w/A}+p\alpha$.
As $F^{s}_{A_4^w/A}+p\alpha\in\overline{A_1}$,
$H^i(F^{s}_{A_4^w/A}+p\alpha)\ne0$
iff $i=5$.
As
$H^5(F_{A_4^w/A})=0=
H^6(F_{A_4^w/A})$,
it follows that
$H^4(F_{A_4^w/A})\simeq
H^5(V)\simeq
H^5(C)=
H^5(F_{A_4^w/A}^{s}+p\alpha)
\ne0$.

%It is easy from Step 5 i) to see that $H^4(A^w_1) = 0 = H^4(A^w_2)$. 
As $A_1$ is the bottom dominant alcove, by the strong linkage principle
again
$H^4(A^w_1) = 0$.
%Then also $H^4(A^w_2)$ vanishes by Step 2.

In Fig. 3 we have 
%colored 
marked $\lozenge$
on the alcoves $A_i^w$, $i = 3, 4, 5, 6$
% red 
to indicate that they have non-vanishing $H^4$ whereas we have 
%colored 
marked
$\blacklozenge$
on $A_i^w$ %blue 
for $i = 1, 
7$ because
here we have vanishing $H^4$. By Proposition 3.2.iii)
%2  
we then have vanishing $H^4$ also for all alcoves above either $A_1^w$ or $A_7^w$.

This completes the description of  the vanishing behavior of $H^4$ in the intersection of $P_2$ with the chambers $t, y$ and $w$,
and on the facettes between them by Proposition 3.2.iib).
In particular, we record ``exceptional" vanishing
$H^4(F^w_{3/4})=0$
%=H^4(F^w_{5/7})=0$ 
in contrast to a statement in \cite[Section 5]{A81}, cf. also \cite{H},
by marking
$\times$ on the wall.
%; vanishing of $H^4(F^w_{5/7})$ is not exceptional.

 \subsection{$H^4$ in the chambers $s, x$ and $z$}
 We proceed exactly as above and record the results in Fig. 4. First we get the non-vanishing of $H^4$ in the
 %vertically red shaded areas 
 alcoves marked $4$
 and then in the 
 %horizontally red shaded areas 
 alcoves marked with $\circ$.
 By Proposition 3.2.iib) $H^4$ is also non-zero on the facettes between these alcoves.
 Next we locate those special points on the wall between chamber $z$ and chamber $s
 w$ at which $H^4$ vanishes and where moreover the method in Proposition 3.3 ensures vanishing in the cones above
 them. We indicate this by marking
 $\times$ on the bottom alcoves in these cones. 
 As $H^5(A^z_i)\ne0$,
 $i = 10, 11, 12$,
 Proposition 3.4 gives us that $H^4$ is non-zero %first 
 in these alcoves. %$A^z_i$, $i = 10, 11, 12$, 
Then Proposition 3.2.ii) ensures that $H^4$ is likewise non-zero
 in the adjacent alcoves corresponding to $i = 13, 14, 8$ and $9$.
% by Step 2. 
Non-vanishing in $A^z_8$ in turn implies for the same reason non-vanishing also in $A^z_7$ and $A^z_9$. We have 
 %colored 
 marked all these
 alcoves 
 %red 
 with $\lozenge$
 in Fig. 4.

%\newpage

 We now observe that by Proposition 3.5 we have $H^j(F^z_{11/13}) \simeq H^{j+1}(F^{t
 z}_{11/13}) = 0$ for $j \geq 4$. The arguments based on Proposition 3.2 then give $H^4(A_{13}^z) \simeq H^5(A^z_{11})$
 so that by translating to the wall we get $H^4(F^z_{13/15}) \simeq H^5(F^z_{8/11}) = 0$. It follows via Proposition 3.2.iib) that $H^4$ vanishes on $A_{15}^z$ (which we therefore 
 %color blue
 mark with $\blacklozenge$) as well as on
 all alcoves above 
 it
 (Proposition 3.2.iii)). 
 
Consider now the exact sequences coming from Proposition 3.5.ii)
\[
H^4(V)\to
H^5(F_{11/8}^{t z})\to
H^4(F_{11/8}^z)\to
H^5(V)
\]
and
\[
H^4(C)\to
H^4(V)\to
H^3(Q)\to
H^5(C)\to
H^5(V)\to
H^4(Q)
\]
with
$C=Q=F^{t z}_{11/8}+p\beta$.
As $F^{t z}_{11/8}+p\beta$ is a $W$-conjugate of the long wall of $A_1$
we have
$H^\bullet(F^{t z}_{11/8}+p\beta)=0$.
It follows that
$H^4(V)=0=H^5(V)$, and hence
$H^4(F_{11/8}^z)\simeq
H^5(F_{11/8}^{t z})=0$.
Then $H^4(A_8^z)\simeq
H^5(A_{11}^z)$
by Proposition 3.2.
Translating to the wall
we obtain
$H^4(F_{8/6}^z)\simeq
H^5(F_{13/15}^z)=0$, and hence
$H^4(A_6^z)=0$ again by by Proposition  3.2.
Hence we %color 
mark $A_6^z$ with
$\blacklozenge$ and also we get vanishing in all alcoves above it.  
We see likewise 
$H^4(F^z_{14/12})\simeq
H^5(F^{t z}_{14/12})=0$
while
$H^4(F^z_{10/9})\simeq
H^5(F^{t z}_{10/9})\ne0$.

The sequences for $F_{5/7}^z$ analogous to the ones above give $H^4(F_{5/7}^z) \simeq H^5(F_{5/7}^{tz}) \neq 0$ 
(we use for this the observation that $H^\bullet(F^{tz}_{5/7} + p\beta) =0$). Hence also $H^4(A_5^z) \neq 0$ and we
have therefore equipped this alcove with a $\lozenge$ in Fig. 4.

 When $i = 1, 2, 3$ we have (Proposition 3.5.i)) $H^4(A^z_i) \simeq H^5(A^{t
 z}_i) = 0$ giving rise to  %blue coloring 
 $\blacklozenge$ marking of these alcoves. 
 
% We claim that $H^4(A^z_6)$ is zero. By step 5 ii) we get first the long exact sequence 
 %\[ H^5(A_i^{tz}) \to H^4(A^z_i) \to H^5(V). \]
% Here the first module is zero and the last module fits into the exact sequence (where we have used that
% in this case $C = Q = A^{tz}_i + p\beta) = A^{z}_1$)
% \[ H^5(A^{z}_1) \to H^5(V) \to H^4(A^{z}_1). \]
% Noting that $H^j(A_1^{z}) = 0$ for $j\neq 3$ the claim follows.

Finally, we examine $H^4(F^z_{10/12}),
H^4(F^z_{11/12})$,
and
$H^4(F_{A_{10}^z/A_{10}^{t w}})$.
Put for simplicity $%\mu
F=F_{A_{10}^z/A_{10}^{t w}}$.
There are exact sequences
\[
H^4(V)\to
H^5(t\cdot
%\mu
F)\to
H^4(
%\mu
F)
\]
and
\[
H^4(C)\to
H^4(V)\to
H^3(Q)
\]
with
$C=Q=t\cdot
%\mu
F+p\beta$.
%Using those exact sequences for $t\cdot\mu+p\beta$ we see
Another application of  Proposition 3.5.i)
gives
$H^i(t\cdot
%\mu
F+p\beta)\simeq
H^{i+1}(t\cdot((t\cdot
%\mu
F+p\beta))=0$
unless $i=5$.
It follows that
$H^4(V)=0$,
and hence that
$H^4(
%\mu
F)$ surjects onto
$H^5(t\cdot
%\mu
F)\ne0$.
Likewise 
$H^4(F^z_{10/12})\simeq
H^5(F^{t z}_{10/12})=0=
H^5(F^{t z}_{11/12})\simeq
H^4(F^z_{11/12})$.

 This
 completes the the description of the vanishing behavior of $H^4$ in 
 the intersection of $P_2$ with the chambers $z, x$ and $s$,
 and also on the facettes between them.
 We record
$H^4(F^z_{11/13})=
H^4(F^z_{14/12})=
H^4(F^z_{10/12})=
H^4(F^z_{11/12})=0$, see again \cite{A81} and \cite{H}.

\section{$H^3$}

\subsection{$H^3$ in the chambers $t$ and $y$}
 The same methods as used for $H^4$ above give non-vanishing of $H^3$ in the alcoves marked $3$ and $\circ$, and vanishing in the alcoves
 marked $\times$ or $\blacklozenge$
 and all alcoves above such,
 see Fig. 5. Moreover, we also have $H^3(A_1^y) = 0$ so that we can mark this alcove by a $\blacklozenge$. As before we have vanishing $H^3$ in alcoves above it. 
 
 Our $H^4$-results in Section 4 give non-vanishing $H^4$ in the alcoves $A^y_i$ for $i = 20, 22, 24$. By Proposition 3.4 this implies non-vanishing of  $H^3$ in the same alcoves and then by the exact sequence (3.1) also
 first in the alcove $A_{18}^y$ 
 as
 $H^4(F^y_{18/22})=0$,
 and then 
 by Proposition 3.2.iia)
 in $A_i^y$, $i = 9, 10, 12, 14, 17$. We have therefore marked all these alcoves $\lozenge$. 
Proposition 3.4 
%likewise 
gives non-vanishing 
%$H^3$ on the 
%facettes 
of
$H^3(F^y_{22/20}),
H^3(F^y_{24/20})$
and
$H^3(F_{A^y_{24}/A^w_{24}})$
among the walls of the alcoves   
$A^y_i$ for $i = 20, 22, 24$.
Moreover,
\begin{align*}
H^3(F^y_{22/25})
&\simeq
H^4(F^z_{22/25})
\quad\text{by Proposition 3.5.i)}
\\
&\ne0
\quad\text{as we have seen in \S4},
\\
H^3(F^y_{22/18})
&\simeq
H^2(F^s_{22/18})
\quad\text{by Proposition 3.5.i)}
\\
&=0
\quad\text{as we will see in 6.2},
\\
H^3(F^y_{20/17})
&\simeq
H^2(F^s_{20/17})
\quad\text{by Proposition 3.5.i)}
\\
&=0
\quad\text{as we will see also 6.2},
\\
H^3(F^y_{24/31})
&\simeq
H^4(F^z_{24/31})
\quad\text{by Proposition 3.5.i)}
\\
&\ne0
\quad\text{as we
have seen in \S4}.
\end{align*}
The remaining walls in this chamber are now covered by Proposition 3.2 iib).

%\newpage

 Consider then $A^y_7$. By Proposition 3.5.ii) we have the exact sequences
 \[ H^1(A^{s}_7) \to H^2(V) \to H^3(A_7^y) \]
 and
 \[ H^2(V) \to H^1(A_5^{s}) \to H^3(A_5^{s}). \]
By Proposition 3.5.i),
in the first sequence $H^1(A^{s}_7) \simeq H^0(A_7)$ and in the second sequence  $H^1(A_5^{s}) \simeq H^0(A_5)$ whereas $H^3(A_5^{s})\simeq
H^2(A_5)=0$. Now by \cite{A86}
$L(A_4)$ is a composition factor of $H^0(A_5)$ but not of $H^0(A_7)$. We conclude that $H^3(A^y_7) \neq 0$. 

%The same arguments applied to $F^y_{7/8}$ give a surjection $H^0(F_{5/6}) \to H^3(F^y_{7/8})$. On the other hand, the sequences from Step 5 ii) relative to $\alpha$ give a surjection
%$H^4(F^yz_{7/8}) \to H^3(F^y_{7/8})$. A closer examination (using the $\beta$-version of the sequences in Step 5 ii)) show that $H^4(F^yz_{7/8}) = L(F_{1/2})$. The combination of
%these facts shows that $H^3$ vanish on $F_{7/8}^y)$. Hence it also vanishes on $A^y_8$ 

Likewise we have exact sequences
 \[ 
 H^2(V) \to H^3(A_8^y)\to
  H^2(A_8^s) \]
  and
 \[ 
   H^2(A_6^s)\to
   H^2(V) \to H^1(A_6^{s}) \to H^3(A_6^{s}) \]
with
$H^j(A_6^s)\simeq
H^{j-1}(A_6)$
by Proposition 3.5.i), which vanishes unless $j=1$.
As $H^2(A_8^s)\simeq H^1(A_8)=0$
by Proposition 3.5.i)
again,
we get
$H^0(A_6)\twoheadrightarrow
H^3(A_8^y)$.
But the composition factors of $H^0(A_6)$ are $L(A_6)$ and $L(A_5)$
while $H^3(F_{8/6}^y)=0$
as
$H^3(A_6^y)=0$.
As the translation to the $F_{8/6}$-wall annihilates
neither 
$L(A_6)$ nor $L(A_5)$,
we must have
$H^3(A_8^y)=0$
(which we therefore marked $\blacklozenge$) and on all alcoves above it.

This completes the determination of $H^3$ in these chambers.
We record
$H^3(F^y_{22/18})
=
H^3(F^y_{20/17})=0$.

\subsection{$H^3$ in the chambers $s$ and $x$}

Once again we get non-vanishing in the alcoves marked $3$ and $\circ$ as well as vanishing in the alcoves marked $\times$ and all alcoves above them.

By Proposition 3.5.i) we get $H^3(A^x_1) \simeq H^2(A_1) = 0$,  
%for $i = 1,2$ 
so we have marked 
%these two 
alcove
$A_1^x$ with $\blacklozenge$ and have vanishing of $H^3$ in all alcoves above 
%them. 
it.
On the other hand we get $H^3(A^x_3) \simeq H^4(A_3^w) \neq 0$.

Weights on the facet $F_{3/4}$ are minimal in $X^+$ (with respect to the strong linkage). Hence $H^3(F^x_{3/4}) = 0$
by
\cite{A80a}.
This gives vanishing of $H^3$ first on $A^x_4$ (marked $\blacklozenge$ in Fig. 6)
by Proposition 3.2.iib) and
then on all alcoves above it by Proposition 3.2.iii).

The $\beta$-sequences from Proposition 3.5.ii) relative to $A^x_8$ give $H^3(A_8^x) \simeq H^4(A^w_8) \neq 0$. So we mark $A^x_8$ and subsequently $A^x_6$ by $\lozenge$
from the exact sequence (2.1)
as
$H^4(A^x_6)=0$. The same sequences with respect to
$A^x_{11}$ give $H^3(A^x_{11}) \simeq H^4(A_{11}^w) = 0$. We mark $A^x_{11}$
with $\blacklozenge$
 and 
 $H^3$ vanishes on all alcoves above it.
 %subsequently also $A^x_{12}$ by .

Note that $H^4(A^x_{23}) \neq 0$. By Proposition 3.4 this means that also $H^3(A_{23}^x) \neq 0$. The 
%usual argument (Step 2) 
exact sequence (2.1)
then gives non-vanishing $H^3$ in the adjacent alcove
$A^x_{21}$ as well as in the alcove $A_{19}^x$ below it. We mark these alcoves $\lozenge$ in Fig. 6.

%Consider an exact sequence
%\[\theta_tH^3(A_{23}^x)\to H^3(A_{26}^x)\to H^4(A_{23}^x)\to\theta_tH^4(A_{23}^x).\]
%If $\theta_tH^3(A_{23}^x)=0$, as $\theta_tH^4(A_{23}^x)=0$ we would have $H^3(A_{26}^x)\simeq H^4(A_{23}^x)$.
%But the translation of $H^3(A_{26}^x)$ to the middle-sized wall is nonzero while that of $H^4(A_{23}^x)$ annihilates it.
%Thus $\theta_tH^3(A_{23}^x)\ne0$, and hence $H^3(F_{23/26}^x)\ne0$.
%As $H^3(F_{21/16}^x)\ne0$ by Remark ii), we see likewise that $H^3(F_{23/21}^x)\ne0$.
By Proposition 3.5.i) we have
$H^3(F_{23/26}^x)
\simeq
H^4(F_{23/26}^w)\ne0$.
Likewise
$H^3(F_{23/21}^x)
\simeq
H^2(F_{23/21}^t)$
which does not vanish as we will see in
6.1.
Also, as
$H^4(F_{A^x_{23}/A^z_{23}})\ne0$
by
Proposition 3.2.iib), we get from Proposition 3.4
that
$H^3(F_{A^x_{23}/A^z_{23}})\ne0$.

Consider now $F_{22/25}^x$. The exact $\beta$-sequences from Proposition 3.5 give
\[ H^4(F_{22/25}^w) \to H^3( F_{22/25}^x) \to H^4(V) \]
and
\[ H^4(C) \to H^4(V) \to H^3(Q).\]
 Here our $H^4$-results show that $H^4(F_{22/25}^w) = 0$ and by looking at the weights $F_{22/25}^w +p \beta$ and $F_{22/25}^w + 2 p \beta$ of both $C$ and $Q$ we see that $H^4(C)
 = 0 = H^3(Q)$. Hence $H^4(V) = 0$ and it follows that $H^3( F_{22/25}^x)= 0$. Therefore also $H^3(A_{25}^x) = 0$
 by Proposition 3.2.ii)
 as
 $H^4(A^x_{22})=0$. We mark this alcove $\blacklozenge$ in Fig. 6 and have vanishing $H^3$ in all
 alcoves above it.
 
 We claim that $H^3(A_{22}^x) \neq 0$. To see this consider the $\alpha$-sequences in Proposition 3.5.ii)
 \[ H^1(A_{22}^{t}) \to H^2(V) \to H^3(A_{22}^x)\to
 H^2(A^t_{22}) \]
 and \[ H^2(V) \to H^1(Q) \to H^3(C).\]
 The weights of $C$ and $Q$ are $\{A_{22}^x + jp\alpha \mid j = 1,2,3 \}$. 
We know
$H^3(A_{22}^x + p\alpha)=0$.
Also by Proposition 3.5.i) and
by Kempf
we have
$H^3(A_{22}^x + 3p\alpha)\simeq
H^2(t\cdot(A_{22}^x + 3p\alpha))=0=H^3(A_{22}^x + 2p\alpha)$;
\begin{align*}
H^3(A_{22}^x + 3p\alpha)
&=
H^3(A_{20}^t)
\simeq
H^2(A_{20})
\quad\text{by Proposition 3.5.i)}
\\
&=
0
\quad\text{by Kempf}
\\
&=
H^3(A_{13})
=H^3(A_{22}^x + 2p\alpha)
\quad\text{by Kempf again}.
\end{align*}
It follows that
$H^3(C)=0$, and hence we obtain an epi
$H^2(V)\twoheadrightarrow
H^1(Q)$.
Likewise, as
$H^2(A^t_{22}) \simeq
H^1(A_{22})=0$
and as
$H^1(A^t_{22}) \simeq
H^0(A_{22})$
both by Proposition 3.5.i),
we obtain an exact sequence
$H^0(A_{22})\to
H^2(V)\to
H^3(A_{22}^x)\to0$.
Moreover, 
we have an exact sequence
\[ H^0(Q') \to H^1(A_{22}^x + p\alpha) \to H^1(Q),\]
where the weights of $Q'$ are $A_{22}^x + 2p\alpha$ and $A_{22}^x + 3p\alpha$. As $A_{22}^x + 3p\alpha$ is non-dominant it follows that $H^0(Q') \simeq  H^0(A_{22}^x + 2p\alpha) =
H^0(A_{13})$. 
  %We see that $H^3(C) = 0$ and moreover that 
On the other hand,
the $\alpha$-sequences in Proposition 3.5.ii) yields
an exact sequence
\[
0\to
H^0(A_{22}^x + 2p\alpha)\to
H^1(A_{22}^x + p\alpha)\to
H^0(s\cdot(A_{22}^x + p\alpha))
\to
H^1(A_{22}^x + 2p\alpha)
\]
with
$H^0(A_{22}^x + 2p\alpha)=H^0(A_{13})$,
$H^1(A_{22}^x + 2p\alpha)=H^1(A_{13})=0$,
and $H^0(s\cdot(A_{22}^x + p\alpha))=H^0(A_{14})$.
Now we use the observation in Proposition 3.6 that 
$H^0(A_{14})$ has composition factor
$L(A_6)$
while 
$H^0(A_{13})$
does not.
It follows that
$H^1(A_{22}^x + p\alpha)$,
and hence also
$H^1(Q)$ and $H^2(V)$ have 
composition factor
$L(A_6)$.
As $H^0(A_{22})$
does not have composition factor
$L(A_6)$
again by
\cite{A86},
we conclude that
$L(A_6)$ is a composition factor of
$H^3(A_{22}^x)$ proving our claim.

If $p=7$, as $A_{13}, A_{14}$ and $A_{22}$ do not live in the Jantzen region, \cite{A86} does not directly apply.
One can, however, compute the multiplicity of
$L(A_6)$ in
$H^0(A_{13}),
H^0(A_{14}),
H^0(A_{22})$
using the $G_1T$-version of the Lusztig conjecture,  $G_1$ the Frobenius kernel of $G$,
which holds 
thanks to 
\cite{JCJ},
by recalling 
\[
[H^0(A) : L(C)]
=
\sum_{\nu\in X}
[\hat\nabla(A) : \hat L(\nu)][L(\nu^0)\otimes\chi(\nu^1)^{[1]} : L(C)],
\]
where $\nu=\nu^0+p\nu^1$
with
$\nu_0\in X_p$ and
$\nu^1\in X$,
$\chi(\nu^1)=\sum_i(-1)^iH^i(\nu^1)$, and
$[\hat\nabla(A) : \hat L(\nu)]$ is the multiplicity of 
the
$G_1T$-irreducible
$\hat L(\nu)$ of highest weight $\nu$
in the $G_1T$-composition series of
$\hat\nabla(A)=\ind_{B_1T}^{G_1T}(A)$
with $B_1$
the Frobenius kernel $B$.

 It follows from
 the exact sequence (2.1)
 that also the alcoves $A^x_{18}$,   $A^x_{16}$,   $A^x_{15}$, $A^x_{21}$, and
 $A^x_{19}$ have non-vanishing $H^3$ and they have been marked accordingly in Fig. 6.
 
% We have thus described the behavior of $H^3$ in all alcoves of the present chambers and in their facettes.

 \section{$H^2$}
 
 \subsection{$H^2$ in 
 the
 chamber $t$}
 
Propositions 3.1-3 suffice to determine the vanishing behavior of $H^2$ in this chamber, see Fig. 7.

\begin{figure}
\begin{center}
{\tiny%
  \setlength{\unitlength}{3.6mm}
  % [inline block 0: 6 envs, 82889 chars -> data_tex | \begin{picture}(40,44) % %%%%%%%%%%%"Žš...]
}
%\vspace{-5cm}
\caption{Extra $H^2$-nonvanishing in the chamber $t$}
\end{center}
\end{figure}

 \subsection{$H^2$ in the chamber $s$}
Propositions 3.1-3 again settle most alcoves in this chamber and they give 
 $H^2(A_1^s)=0$. The only alcoves needing special considerations are $A^s_j$ with $j = 9, 10, 12, 14, 17, 18, 24, 31, 32, 33, 34, 35$.
 We claim that $H^2$ is non-zero in the first $10$ of those but zero on the last two. 
 %The usual arguments show that
To prove this, using the exact sequence
 (2.1)
 and %by
 Proposition 3.2.iia),  it is enough to 
 %prove 
 show
 that $H^2(A^s_{18}) \neq 0
 \neq H^2(A^s_{33})$ whereas $H^2(A_{34}^s) = 0$.
See Fig. 8.

\begin{figure}
\begin{center}
{\tiny%
  \setlength{\unitlength}{3.6mm}
  \begin{picture}(40,44)

\put(12.1,36.7){\makebox(0,0){2}}
\put(11.5,37.7){\makebox(0,0){2}}
\put(12.1,38.6){\makebox(0,0){2}}
%\put(12.1,18){\makebox(0,0){5}}
%\put(10.5,23.7){\makebox(0,0){$\times$}}
%\put(12.1,20){\makebox(0,0){5}}

%%%%%%%%%%%%%%%%%%%%%%%%%
%\put(10.4,33.2){\makebox(0,0){4}}
%\put(11.1,31.8){\makebox(0,0){4}}
%\put(10.4,31.1){\makebox(0,0){4}}
%\put(12.1,32.4){\makebox(0,0){4}}
%\put(11.4,33.4){\makebox(0,0){4}}
%\put(12,34.2){\makebox(0,0){4}}
%%%%%%%%%%%%%%%%%%%%%%%
%\put(12,35.4){\makebox(0,0){4}}
%\put(11.4,36.3){\makebox(0,0){4}}
%\put(12,37.3){\makebox(0,0){4}}

%%%%%%%%%%%%%%%%%%%%%%6th column
%\put(14.6,14.9){\makebox(0,0){4}}
%\put(14.6,16.8){\makebox(0,0){3}}
%\put(14,16){\makebox(0,0){4}}
%\put(14.6,15.6){\makebox(0,0){5}}
%\put(13.6,14.4){\makebox(0,0){4}}
%\put(13,13.6){\makebox(0,0){4}}
%\put(13,15.4){\makebox(0,0){4}}

%%%%%%%%%%%%%%%%%%%%%%%%%
%\put(14.6,17.9){\makebox(0,0){3}}
\put(14.6,32.5){\makebox(0,0){2}}
\put(14,33.4){\makebox(0,0){2}}
%\put(12.9,25.6){\makebox(0,0){1}}
%\put(14,23.2){\makebox(0,0){2}}
%\put(14.5,22.3){\makebox(0,0){$\times$}}
%\put(13,19.3){\makebox(0,0){$\times$}}
%\put(12.9,23.2){\makebox(0,0){2}}
%\put(13,22.8){\makebox(-0.1,1.4){1}}
%\put(4.6,22.4){\makebox(0,0){4}}
\put(14.6,34.3){\makebox(0,0){2}}

%%%%%%%%%%%%%%%%%%%%%%%%%
\put(14.6,35.3){\makebox(0,0){2}}
\put(14.6,37.2){\makebox(0,0){2}}
\put(13.9,36.2){\makebox(0,0){2}}
%\put(12.9,25.6){\makebox(0,0){1}}
%\put(14,23.2){\makebox(0,0){2}}
%\put(14.5,22.3){\makebox(0,0){$\times$}}
\put(13,36.7){\makebox(0,0){$\times$}}

%%%%%%%%%%%%%%%%%%%%%%7th column
%\put(15.4,17.9){\makebox(0,0){3}}
%\put(17.1,15.4){\makebox(0,0){$\lozenge$}}
%\put(16.2,16){\makebox(0,0){3}}
%\put(17,9.2){\makebox(0,0){5}}
%\put(17,11.2){\makebox(0,0){5}}
%\put(15.4,16.8){\makebox(0,0){3}}

\put(17.1,28){\makebox(0,0){2}}
\put(17.1,29.8){\makebox(0,0){2}}
\put(16.5,29){\makebox(0,0){2}}
%\put(16.1,18.8){\makebox(0,0){3}}
%\put(18,13.6){\makebox(0,0){4}}
\put(15.5,32.4){\makebox(0,0){$\times$}}

%%%%%%%%%%%%%%%%%%%%%%%%
\put(17.1,31){\makebox(0,0){2}}
\put(17.1,32.9){\makebox(0,0){2}}
\put(16.4,31.9){\makebox(0,0){2}}

%\put(15.5,28.1){\makebox(0,0){$\times$}}
%\put(17,25.7){\makebox(0,0){4}}
%\put(17,26.7){\makebox(0,0){4}}
%\put(16.4,27.6){\makebox(0,0){4}}
%\put(17,28.8){\makebox(0,0){4}}

%%%%%%%%%%%%%%%%%%%%%%8th column
\put(19.6,23.6){\makebox(0,0){2}}
\put(19.6,25.5){\makebox(0,0){2}}
\put(19,24.6){\makebox(0,0){2}}
%\put(18.6,14.4){\makebox(0,0){3}}
%\put(18,13.6){\makebox(0,0){4}}
%\put(18,15.4){\makebox(0,0){3}}

%%%%%%%%%%%%%%%%%%%%%%%%
\put(19.6,26.7){\makebox(0,0){2}}
\put(19.6,28.6){\makebox(0,0){2}}
\put(18.9,27.5){\makebox(0,0){2}}
%\put(19.6,24.4){\makebox(0,0){4}}
%\put(19.6,33){\makebox(0,0){4}}
%\put(19.6,31){\makebox(0,0){4}}
%\put(19.6,28){\makebox(0,0){4}}
%\put(18.6,24.8){\makebox(0,0){4}}
%\put(18.6,27.6){\makebox(0,0){4}}
%\put(18.9,29.1){\makebox(0,0){4}}
%\put(18.9,31.9){\makebox(0,0){4}}
%\put(19.6,29.9){\makebox(0,0){4}}
%\put(18.2,25.7){\makebox(0,0){4}}
%\put(18.2,26.7){\makebox(0,0){4}}
\put(18,28){\makebox(0,0){$\times$}}

%\put(19.6,21.2){\makebox(0,0){$\times$}}

%%%%%%%%%%%%%%%%%%%%%%9th column
\put(22,21.3){\makebox(0,0){2}}
\put(22,19.3){\makebox(0,0){2}}
\put(21.5,20.3){\makebox(0,0){2}}

%%%%%%%%%%%%%%%%%%%%%%%%%%
\put(20.4,23.5){\makebox(0,0){$\times$}}
\put(21.5,23.1){\makebox(0,0){2}}
\put(22,22.3){\makebox(0,0){2}}
\put(22,24.1){\makebox(0,0){2}}

%%%%%%%%%%%%%%%%%%%%%%%
%\put(20.4,3.8){\makebox(0,0){5}}
%\put(21.2,5.7){\makebox(0,0){5}}
%\put(20.4,4.8){\makebox(0,0){5}}
%\put(22.1,0.6){\makebox(0,0){5}}
%\put(21.4,1.4){\makebox(0,0){5}}
%\put(22.1,2.6){\makebox(0,0){5}}
%\put(21.2,3){\makebox(0,0){5}}
%\put(21.4,7.3){\makebox(0,0){5}}
%\put(22.1,6.2){\makebox(0,0){5}}
%\put(22.1,8.2){\makebox(0,0){5}}
%\put(22.1,9.2){\makebox(0,0){5}}
%\put(22.1,11.2){\makebox(0,0){5}}

%%%%%%%%%%%%%%%%%%%%%%%
%\put(22.1,23.6){\makebox(0,0){4}}
%\put(21.4,27.7){\makebox(0,0){4}}
%\put(20.4,24.4){\makebox(0,0){4}} 
%\put(20.4,28){\makebox(0,0){$\times$}} 
%\put(21.2,23.3){\makebox(0,0){4}}
%\put(21.4,24.7){\makebox(0,0){4}}
%\put(22.0,25.6){\makebox(0,0){4}}
%\put(22.1,29.6){\makebox(0,-1.6){4}}   
%\put(22.0,26.6){\makebox(0,0){4}}

%%%%%%%%%%%%%%%%%%%%%%10th column
%\put(23,0.6){\makebox(0,0){5}}
%\put(23,2.6){\makebox(0,0){5}}
%\put(23,6.2){\makebox(0,0){5}}
%\put(23,8.2){\makebox(0,0){5}}
%\put(23.7,11.6){\makebox(0,0){3}}
%\put(24.6,11.2){\makebox(0,0){$\lozenge$}}
%\put(23,12.6){\makebox(0,0){3}}

%%%%%%%%%%%%%%%%%%%%%%%%%%%%%%%%
%\put(23,13.6){\makebox(0,0){3}}
%\put(23.6,14.5){\makebox(0,0){3}}
\put(23,19.4){\makebox(0,0){$\times$}}
\put(24.6,14.9){\makebox(0,0){$\lozenge$}}
\put(24,15.9){\makebox(0,0){$\lozenge$}}
\put(24.6,16.8){\makebox(0,0){$\lozenge$}}

%%%%%%%%%%%%%%%%%%%%%%%%%
\put(23.9,18.8){\makebox(0,0){$\blacklozenge$}}
%\put(23.9,11.7){\makebox(0,0){5}}
%\put(24.6,20){\makebox(0,0){2}}
\put(24.6,17.9){\makebox(0,0){$\lozenge$}}
%\put(24.6,7){\makebox(0,0){5}}
%\put(24.6,4.9){\makebox(0,0){5}}
%\put(24.6,3.8){\makebox(0,0){5}}
%\put(24.6,1.9){\makebox(0,0){5}}

%%%%%%%%%%%%%%%%%%%%%%%
%\put(23,16.9){\makebox(0,0){3}}
%\put(24,14.5){\makebox(0,0){5}}
%\put(24.6,15.5){\makebox(0,0){5}}
%\put(24.6,13.6){\makebox(0,0){5}}

%%%%%%%%%%%%%%%%%%%%%%%
%\put(23,19.8){\makebox(0,0){$\circ$}}
%\put(23,19.7){\makebox(-0.2,1.3){2}}
%\put(23,18){\makebox(0,0){3}}
%\put(23.7,18.7){\makebox(0,0){$\circ$}}
%\put(23.9,20.4){\makebox(0,0){$\times$}}
%\put(24.6,19.1){\makebox(0,0){$\circ$}}
%\put(24.6,19.4){\makebox(0.3,-1.5){2}}
%\put(24.6,21.3){\makebox(0,0){4}}

%%%%%%%%%%%%%%%%%%%%%%%
%\put(24.6,22.3){\makebox(0,0){4}}
%\put(24,23.2){\makebox(0,0){4}}
%\put(24.6,24.3){\makebox(0,0){4}}
%\put(22.9,23.6){\makebox(0,0){$\times$}}
%\put(22.9,25.6){\makebox(0,0){$\lozenge$}}
%\put(23.6,24.7){\makebox(0,0){$\lozenge$}}

%%%%%%%%%%%%%%%%%%%%%%%%
%\put(24.6,28){\makebox(0,0){$\lozenge$}}
%\put(23.8,29.2){\makebox(0,0){$\lozenge$}}
%\put(24.6,29.9){\makebox(0,0){$\lozenge$}}
%\put(22.9,26.6){\makebox(0,0){$\lozenge$}}
%\put(22.9,28.6){\makebox(0,0){$\lozenge$}}
%\put(23.6,27.6){\makebox(0,0){$\lozenge$}}

%%%%%%%%%%%%%%%%%%%%%%%
%\put(24.6,31){\makebox(0,0){$\lozenge$}}
%\put(24,31.9){\makebox(0,0){$\lozenge$}}
%\put(24.6,33.1){\makebox(0,0){$\lozenge$}}

%%%%%%%%%%%%%%%%%%%%%%11th column
%\put(25.4,14.9){\makebox(0,0){$\lozenge$}}
%\put(26.1,10.2){\makebox(0,0){$\lozenge$}}
%\put(25.4,11.2){\makebox(0,0){$\lozenge$}}
\put(27,12.4){\makebox(0,0){$\lozenge$}}
\put(26.4,11.5){\makebox(0,0){$\lozenge$}}
\put(27,10.6){\makebox(0,0){$\lozenge$}}

%%%%%%%%%%%%%%%%%%%%%%%%%%%%%%%
\put(25.5,14.9){\makebox(0,0){$\times$}}
%\put(26,5.8){\makebox(0,0){5}}
\put(27,13.6){\makebox(0,0){$\lozenge$}}
%\put(25.4,6.9){\makebox(0,0){5}}
\put(26.4,14.5){\makebox(0,0){$\lozenge$}}
\put(27,15.4){\makebox(0,0){$\lozenge$}}

\put(29.5,6.3){\makebox(0,0){$\blacklozenge$}}
\multiput(2.5,0)(5,0){7}{\line(4,7){2.5}}
\multiput(2.5,0)(5,0){7}{\line(-4,7){2.5}}
\multiput(7.5,0)(5,0){6}{\line(-7,4){7.5}}
\multiput(2.5,0)(5,0){1}{\line(-7,4){2.5}}
\multiput(2.5,0)(5,0){6}{\line(7,4){7.5}}
\multiput(32.5,0)(5,0){1}{\line(7,4){2.5}}
\multiput(0,1.45)(5,0){1}{\line(7,4){5}}
\multiput(35,1.45)(5,0){1}{\line(-7,4){5}}

\multiput(0,4.3)(5,0){7}{\line(4,7){2.5}}
\multiput(5,4.3)(5,0){7}{\line(-4,7){2.5}}
\multiput(10,4.3)(5,0){6}{\line(-7,4){7.5}}
\multiput(5,4.3)(5,0){1}{\line(-7,4){5}}
\multiput(0,4.4)(5,0){6}{\line(7,4){7.5}}
\multiput(30,4.3)(5,0){1}{\line(7,4){5}}
\multiput(0,7.25)(5,0){1}{\line(7,4){2.5}}
\multiput(35,7.25)(5,0){1}{\line(-7,4){2.5}}

\multiput(2.5,8.6)(5,0){7}{\line(4,7){2.5}}
\multiput(2.5,8.6)(5,0){7}{\line(-4,7){2.5}}
\multiput(7.5,8.6)(5,0){5}{\line(-7,4){7.5}}
\multiput(32.5,8.6)(5,0){1}{\line(-7,4){7.5}}\multiput(2.5,8.6)(5,0){1}{\line(-7,4){2.5}}
\multiput(2.5,8.65)(5,0){6}{\line(7,4){7.5}}
\multiput(32.5,8.65)(5,0){1}{\line(7,4){2.5}}
\multiput(0,10.05)(5,0){1}{\line(7,4){5}}
\multiput(35,10.15)(5,0){1}{\line(-7,4){5}}

\multiput(0,12.9)(5,0){7}{\line(4,7){2.5}}
\multiput(5,12.9)(5,0){7}{\line(-4,7){2.5}}
\multiput(10,13.05)(5,0){3}{\line(-7,4){7.5}}
\multiput(24.95,13.05)(5,0){2}{\line(-7,4){7.5}}\multiput(34.9,13.1)(5,0){1}{\line(-7,4){7.5}}
\multiput(5,12.9)(5,0){1}{\line(-7,4){5}}
\multiput(0,13.05)(5,0){6}{\line(7,4){7.5}}
\multiput(30,12.9)(5,0){1}{\line(7,4){5}}
\multiput(0,15.9)(5,0){1}{\line(7,4){2.5}}
\multiput(35,15.85)(5,0){1}{\line(-7,4){2.5}}

\multiput(2.5,17.4)(5,0){7}{\line(4,7){2.5}}
\multiput(2.5,17.4)(5,0){7}{\line(-4,7){2.5}}
\multiput(12.5,17.4)(5,0){1}{\line(-7,4){7.5}}
\multiput(17.5,17.4)(5,0){3}{\line(-7,4){7.5}}
\multiput(32.5,17.4)(5,0){1}{\line(-7,4){7.5}}
\multiput(2.5,17.4)(5,0){1}{\line(-7,4){2.5}}
\multiput(2.5,17.4)(5,0){6}{\line(7,4){7.5}}
\multiput(32.5,17.4)(5,0){1}{\line(7,4){2.5}}
\multiput(0,18.85)(5,0){1}{\line(7,4){5}}
\multiput(35,18.85)(5,0){1}{\line(-7,4){5}}

\multiput(0,21.7)(5,0){7}{\line(4,7){2.5}}
\multiput(5,21.7)(5,0){4}{\line(-4,7){2.5}}
\multiput(25,21.7)(5,0){3}{\line(-4,7){2.5}}\multiput(10.1,21.7)(5,0){2}{\line(-7,4){7.5}}
\multiput(20.1,21.7)(5,0){4}{\line(-7,4){7.5}}
%\multiput(34.9,21.7)(5,0){1}{\line(-7,4){7.5}}
\multiput(5,21.7)(5,0){1}{\line(-7,4){5}}
\multiput(0,21.75)(5,0){6}{\line(7,4){7.5}}
\multiput(30,21.7)(5,0){1}{\line(7,4){5}}
\multiput(0,24.6)(5,0){1}{\line(7,4){2.5}}
\multiput(35,24.6)(5,0){1}{\line(-7,4){2.5}}

\multiput(2.5,26.1)(5,0){7}{\line(4,7){2.5}}
\multiput(2.5,26.1)(5,0){7}{\line(-4,7){2.5}}
\multiput(7.5,26.1)(5,0){2}{\line(-7,4){7.5}}
\multiput(17.5,26.1)(5,0){4}{\line(-7,4){7.5}}
%\multiput(17.5,17.4)(5,0){3}{\line(-7,4){7.5}}
%\multiput(32.5,26.1)(5,0){1}{\line(-7,4){7.5}}
\multiput(2.5,26.1)(5,0){1}{\line(-7,4){2.5}}
\multiput(2.5,26.1)(5,0){6}{\line(7,4){7.5}}
\multiput(32.5,26.1)(5,0){1}{\line(7,4){2.5}}
\multiput(0,27.65)(5,0){1}{\line(7,4){5}}
\multiput(35,27.65)(5,0){1}{\line(-7,4){5}}

\multiput(0,30.4)(5,0){7}{\line(4,7){2.5}}
\multiput(5,30.4)(5,0){2}{\line(-4,7){2.5}}
\multiput(20,30.4)(5,0){4}{\line(-4,7){2.5}}\multiput(10.1,30.4)(5,0){6}{\line(-7,4){7.5}}
%\multiput(20.1,30.4)(5,0){4}{\line(-7,4){7.5}}
%\multiput(34.9,21.7)(5,0){1}{\line(-7,4){7.5}}
\multiput(5,30.4)(5,0){1}{\line(-7,4){5}}
\multiput(0,30.45)(5,0){6}{\line(7,4){7.5}}
\multiput(30,30.4)(5,0){1}{\line(7,4){5}}
\multiput(0,33.35)(5,0){1}{\line(7,4){2.5}}
\multiput(35,33.3)(5,0){1}{\line(-7,4){2.5}}

\multiput(2.5,34.8)(5,0){7}{\line(4,7){2.5}}
\multiput(2.5,34.8)(5,0){7}{\line(-4,7){2.5}}
\multiput(7.5,34.8)(5,0){6}{\line(-7,4){7.5}}
%\multiput(17.5,34.8)(5,0){4}{\line(-7,4){7.5}}
%\multiput(17.5,17.4)(5,0){3}{\line(-7,4){7.5}}
%\multiput(32.5,26.1)(5,0){1}{\line(-7,4){7.5}}
\multiput(2.5,34.8)(5,0){1}{\line(-7,4){2.5}}
\multiput(2.5,34.8)(5,0){6}{\line(7,4){7.5}}
\multiput(32.5,34.8)(5,0){1}{\line(7,4){2.5}}
\multiput(0,36.3)(5,0){1}{\line(7,4){5}}
\multiput(35,36.3)(5,0){1}{\line(-7,4){5}}

  %%%%%%%'†S'©'瑾ü
\allinethickness{2pt}
\put(30,4.3){\line(-4,7){19.9}}
\put(30,4.4){\line(-7,4){30}}
\put(0,4.3){\line(1,0){35}}
\put(30,4.3){\line(-7,-4){7.6}}
\put(30,4.3){\line(7,4){5}}
\put(30,4.3){\line(7,-4){5}}
\put(30,4.3){\line(0,1){34.8}}
\put(30,4.3){\line(0,-1){4.4}}
\put(30,4.3){\line(4,7){5}}
\put(30,4.3){\line(4,-7){2.5}}
\put(30,4.3){\line(-4,-7){2.5}}

  %%%%%%%
  \linethickness{0.075mm}%%%%
  \multiput(0,0)(2.5,0){15}{\line(0,1){39.1}}%c
  \multiput(0,0)(0,4.35){10}{\line(1,0){35}}
 %‰¡
\end{picture}}
%\vspace{-5cm}
\caption{Extra $H^2$-nonvanishing in the chamber $s$}
\end{center}
\end{figure}

 Consider the $\beta$-sequences for $A^s_{18}$%:
 \[ H^2(V) \to H^3(A_{18}^y) \to H^2(A_{18}^s) \]
 and
 \[ H^2(C) \to H^2(V) \to H^1(Q).\] 
 By our results 
 %above 
obtained  in 5.1 we have $H^3(A_{18}^y) \neq 0$. Moreover, $C = Q = A_{18}^y + p \beta = A_{10}$ is dominant so that $H^j(C) = H^j(Q) = 0 $ for all $j>0$. The non-vanishing
 of  $H^2(A_{18}^s)$ follows.
 
 To see that $H^2(A^s_{33})$ is non-zero, we consider the $\alpha$-sequences
 \[ H^0(A_{33} ) \to H^1(V) \to H^2(A_{33}^s) \]
 and 
 \[ H^1(V) \to H^0(Q) \to H^2(C). \]
 The weights of $C$ and $Q$ are $A^s_{33} + p \alpha = A^s_{26}$ and $A^s_{33} + 2p \alpha = A_{27}$. It follows that $H^2(C) = 0$ by Proposition 3.5.i)
 and by Kempf,
 and that we have an exact sequence
 \[ 
 %H^0(Q) \to 
 H^0(Q) \to H^0(A_{27}) \to H^1(A^s_{26}).\]
 Here the last module is isomorphic to $H^0(A_{26})$
 by Proposition 3.5.i) again. According to \cite{A86} $L(A_6)$ is a composition factor of $H^0(A_{27})$ but not of $H^0(A_{26})$. We conclude that
 $L(A_6)$ is therefore a composition factor of $H^0(Q)$ and hence also of $H^1(V)$. As it is not (\cite{A86} again) a composition factor of $H^0(A_{33})$ the first sequence gives
 the claim.
 Although $A_{26}, A_{27}, A_{33}$
 do not live in the Jantzen region for $p=7$,
 one can  compute 
as in 5.2
 the relevant multiplicities of $L(A_6)$.

 Finally, to check the vanishing of $H^2(A^s_{34})$ we look at the $\beta$-sequences
 \[ H^3(A^y_{34}) \to H^2(A_{34}^s) \to H^3(V) \]
 and \[ H^3(C) \to H^3(V) \to H^2(Q). \]
 Here $H^3(A^y_{34}) = 0$ 
 %(see above) 
 by 4.1
 and considering the weights $A^y_{34} + 2p\beta = A_{19}$ and $A^y_{34} + p\beta = A^t_{15}$, we 
 %easily 
 get $H^2(Q) = 0 = H^3(C)$
 from 5.1 and 4.1. 

 \section{The remaining cases;
 what should we name?}
 The above results together with Serre duality describe the vanishing behavior of all line bundle cohomology corresponding to weights in the lowest $p^2$ alcoves.
 The results
 are collected in Figure 12 below (identical to Fig. 3 in \cite[\S8]{AK10}).
 
 \begin{figure}
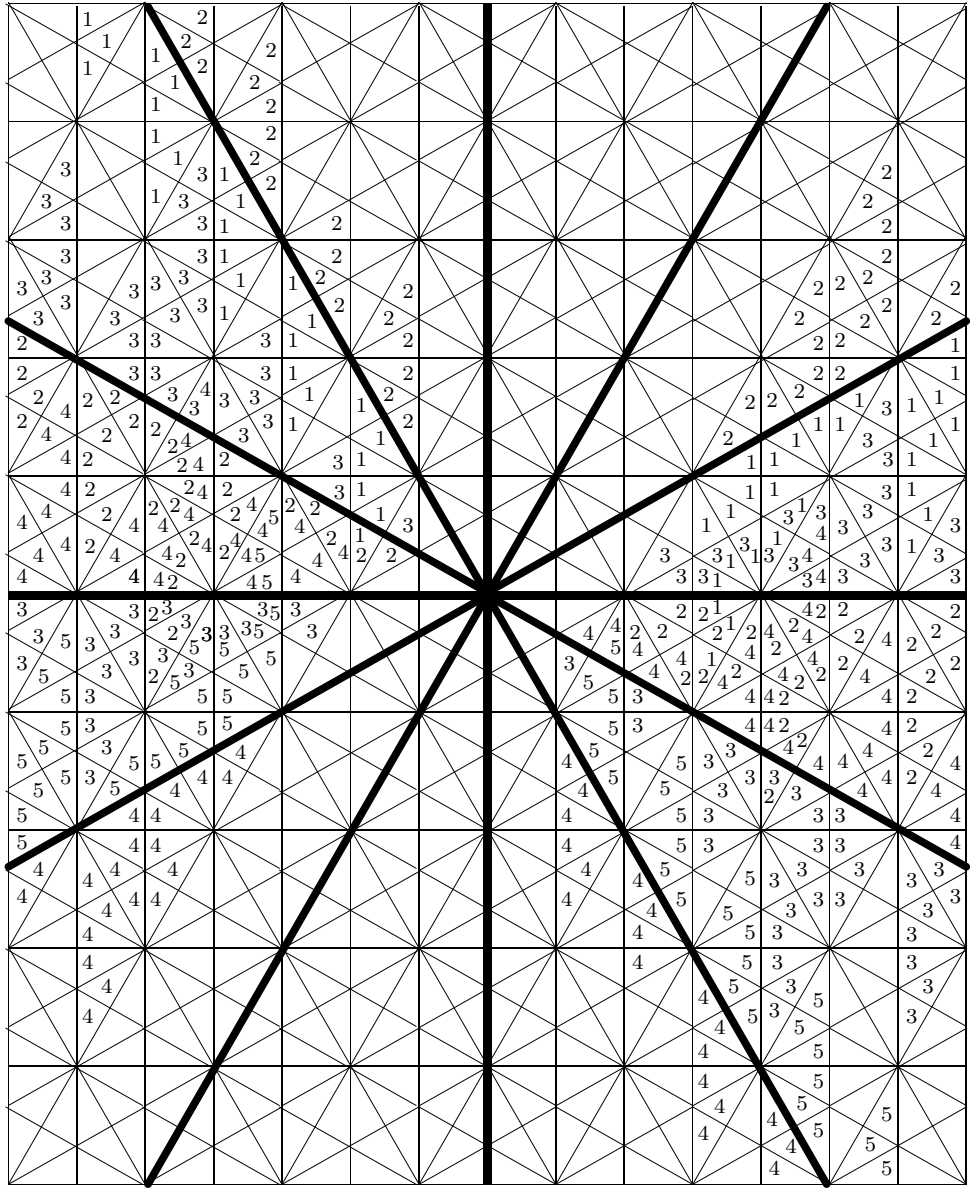

\begin{center}
{\tiny%
  \setlength{\unitlength}{3.6mm}
  % [inline block 1: 1 envs, 20112 chars -> data_tex | \begin{picture}(40,44) % %%%%%%%%%%%"Žš...]
}

\caption{Extra cohomology non-vanishing
for $p\geq11$}
\end{center}
\end{figure}

\section{Higher alcoves}
In \cite{A81} (5.7) the first author formulated a principle which he hoped would allow to extrapolate the vanishing behavior of 
$H^i$ in the higher alcoves from those in the closure  $\overline P_2$ 
of the union
of the lowest $p^2$-alcoves. Properly formulated (see \cite{AK10} Appendix B) this principle says

{\it For an arbitrary weight $ \lambda$ we have $H^i(\lambda) \neq 0$ if and only if there exists $ \chi \in \overline P_2$ with $H^i(\chi) \neq 0$ 
such that $\lambda \in p^n \cdot \chi \pm X_{p^n}$  for some  $n$.} 

The `if part'  of this principle 
%is proved in \cite{A07}, see also \cite{A81}. 
follows from Remark 3.1.
The `only if part' has only been checked (using the methods above) 
for types $A_2$ and $B_2$ (and in general for $i= 1$ and $i = N-1$). The following example shows that{ \bf it fails for type $G_2$.}   

Consider $A = (2p^2 - p) \omega_\alpha - p^2 \omega_\beta + A_2$. This alcove is a subset of $p\cdot \nu +X_p$ and of $p\cdot \nu' - X_p$ where $\nu = 
2(p-1) \omega_\alpha - (p+1) \omega_\beta$ and $\nu' = \nu + \rho$. Note that $\nu \in F^w_{3/4}$ and $\nu' \in F^y_{4/5}$ so that by our results in Section 4.1 we have $H^4(\nu) = 0
= H^4(\nu')$. Therefore the principle above predicts vanishing of $H^4(A)$. However, if $A'$ is the alcove obtained by reflecting $A$ in its mid-sized wall then 
$H^5(A') \neq 0$ (e.g., because $A' \subset p^2 \cdot (-3\omega_\beta) + X_{p^2}$). It follows from Proposition 3.4 that then also $H^4(A) \neq 0$ and hence (5.7) fails for weights in $A$.
Using the arguments in Proposition 3.2.ii) we find many ($17$ for $p=7$) other alcoves for which (5.7) fails.

We shall illustrate this result by taking $p=7$.
Enter first $5$'s in the alcoves with extra $H^5$-non-vanishing in Fig. 10.
Then enter $4$'s where we have extra $H^4$-non-vanishing in
$P_2$ from \S4.
Also enter $4$'s in  the alcoves
belonging to those $p^2$-alcoves where Proposition 3.1 gives extra non-vanishing $H^4$.
%, and then $\circ$ in those alcoves below such where Proposition 3.2 implies extra $H^4$. 

Let
$A_i^2$ denote the $p^2$-alcove of type
$i$
and let
$A_i^{2,v}=v\cdot A_i^2$, $v\in W$.
By Proposition 3.4 we have non-vanishing $H^4$ in all of $A_4^{2,w}$ and we therefore mark all alcoves here which 
are not already equipped with 
a $4$ by $\lozenge$.
Let $A'$ be the top alcove of $A_4^{2,w}$
and let
$A=A't$.
%By Proposition 3.2 
From (3.1)
there is an exact sequence
$H^4(A)\to H^5(A')\to \theta_tH^5(A)$.
As
$H^5(A')\ne0=\theta_tH^5(A)$, we get
$H^4(A)\ne0$.
Mark thus $A$ with $\lozenge$,
and also
all alcoves
%$A''$ 
in $A_3^{2,w}$ below $A$
by Proposition 3.2.iia) (unless such an alcove already contain a $4$).
By Proposition 3.3
we can mark 2 translates of $A_1$ with
$\times$, and then 
$H^4$ vanishes on all alcoves above either of those;
the relevant special points are
$p\cdot(p\omega_\alpha-(p-1)\omega_\beta)$
and
$p\cdot((p+3)\omega_\alpha-p\omega_\beta)$
with both
$p\omega_\alpha-(p-1)\omega_\beta$ and
$(p+3)\omega_\alpha-p\omega_\beta\in
F^w_{2/3}$.

Now 
\begin{align*}
A
&=(2p-1)p\omega_\alpha-p^2\omega_\beta+A_2
\\
&\subseteq
\{p\cdot((2p-2)\omega_\alpha-(p+1)\omega_\beta)+X_p\}
\cap
\\
&\hspace{5cm}
\{p\cdot((2p-1)\omega_\alpha-p\omega_\beta)-X_p\}
\end{align*}
with
$(2p-2)\omega_\alpha-(p+1)\omega_\beta \in
F_{3/4}^w$
and
$(2p-1)\omega_\alpha-p\omega_\beta \in F_{4/5}^y$.
But our results in $P_2$ say that 
$H^4(F_{3/4}^w)=0=H^4(F_{4/5}^y)$.
Also,
\begin{align*}
A
&\subseteq
\{
p^2\cdot(-2\omega_\beta)+X_{p^2}\}\cap
\{
p^2\cdot(\omega_\alpha-\omega_\beta)-X_{p^2}\}
\cap
\\
&\cap_{r\geq3}
\{
\{
p^r\cdot(-\omega_\alpha-2\omega_\beta)+X_{p^r}\}\cap
\{
p^r\cdot(-\omega_\beta)-X_{p^r}\}
\}
\end{align*}
with
$-2\omega_\beta\in F_{A_1^{sw_0}/A_1^{tz}}$,
$\omega_\alpha-\omega_\beta\in F_{A_1/A_1^{t}},
-\omega_\beta\in F_{A_1/A_1^{t}}$
and
$-\omega_\alpha-2\omega_\beta\in F_{A_1^{w_0}/A_1^{sw_0}}$,
showing failure of the hoped-for principle.
Moreover,
$H^4(A-p\omega_\alpha)\ne0$
with
$
A-p\omega_\alpha
\subseteq
\{p\cdot((2p-3)\omega_\alpha-(p+1)\omega_\beta)+X_p\}
\cap
\{p\cdot((2p-2)\omega_\alpha-p\omega_\beta)-X_p\}
$
and
$(2p-3)\omega_\alpha-(p+1)\omega_\beta\in F_{3/4}^w$,
$(2p-2)\omega_\alpha-p\omega_\beta\in
F_{3/4}^y$.
Likewise
$H^4(A-2p\omega_\alpha)\ne0$
with
$
A-2p\omega_\alpha
\subseteq
\{p\cdot((2p-4)\omega_\alpha-(p+1)\omega_\beta)+X_p\}
\cap
\{p\cdot((2p-3)\omega_\alpha-p\omega_\beta)-X_p\}
$
and
$(2p-4)\omega_\alpha-(p+1)\omega_\beta\in F_{3/4}^w$,
$(2p-3)\omega_\alpha-p\omega_\beta\in
F_{A_2^w/A_2^y}$,
$H^4(F_{A_2^w/A_2^y})=0$.
Also,
\begin{align*}
A-p\omega_\alpha, 
&
A-2p\omega_\alpha
\subseteq
\{
p^2\cdot(-2\omega_\beta)+X_{p^2}\}\cap
\{
p^2\cdot(\omega_\alpha-\omega_\beta)-X_{p^2}\}
\cap
\\
&\cap_{r\geq3}
\{
\{
p^r\cdot(-\omega_\alpha-2\omega_\beta)+X_{p^r}\}\cap
\{
p^r\cdot(-\omega_\beta)-X_{p^r}\}
\}.
\end{align*}
Thus all the alcoves in
$A_3^{2,w}$ above
$A_1+(2p-3)p\omega_\alpha-p^2\omega_\beta=(A-2p\omega_\alpha)s$
with $s$ the short wall, 18 of them together in this $p=7$ case,
exhibit failure of the principle.

On the other hand,
let
$\nu_1=p\cdot((2p-5)\omega_\alpha-p\omega_\beta)$.
As
$(2p-5)\omega_\alpha-p\omega_\beta\in A_3^w$ with
$H^4(A_3^w)\ne0$,
the alcoves in $\nu_1+X_p$
are marked $\lozenge$ by the
`if' part of the principle.
If $\nu_2=\nu_1-p\omega_\alpha=
p\cdot((2p-6)\omega_\alpha-p\omega_\beta)$,
$(2p-6)\omega_\alpha-p\omega_\beta\in
F_{A_3^w/A_3^{tz}}$
with
$H^4(F_{A_3^w/A_3^{tz}})\ne0$.
Thus the alcoves in $\nu_2+X_p$
are also marked with $\lozenge$.

%Consider $\tilde A=As$.There are exact sequences
%\[
%\theta_sH^4(A)\to
%H^4(\tilde A)\to
%H^5(A)
%\]
%with $H^5(A)=0$, and
%\[
%\theta_tH^4(A')\to
%H^4(A)\to
%H^5(A')\to
%\theta_tH^5(A')
%\]
%with $\theta_tH^5(A')\simeq \theta_tH^5(A)=0$.
%Thus, if
%$H^4(F_{A'/A})=0$ just like
%$H^4(F_{3/4}^w)=0$,
%then $H^4(A)\simeq H^5(A')$.
%It will follow that
%$\theta_sH^4(A)\simeq\theta_sH^5(A')=0$
%as $H^5(A's)=0$,
%and hence that
%$H^4(\tilde A)=0$.

\begin{figure}
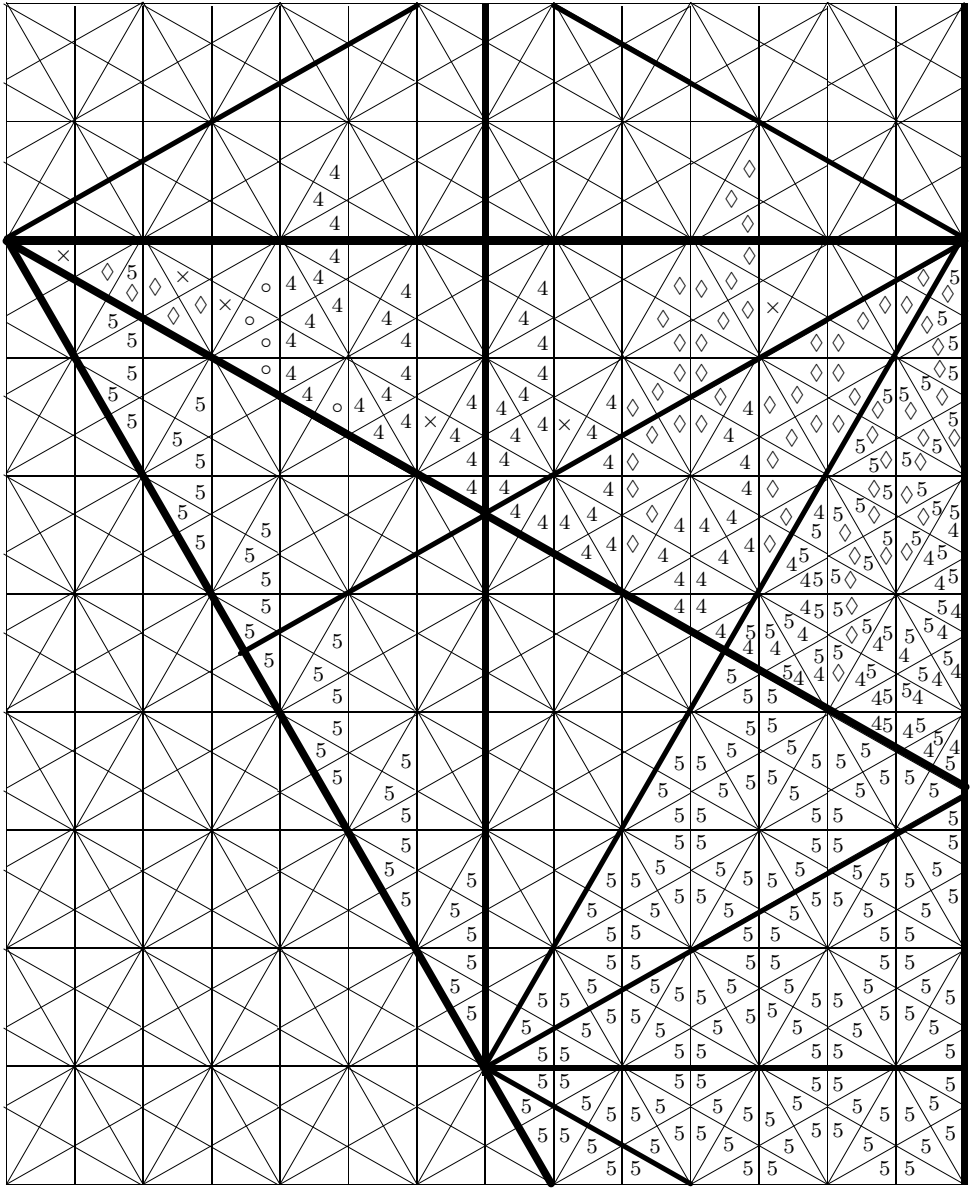

\begin{center}
{\tiny%
  \setlength{\unitlength}{3.6mm}
  % [inline block 2: 1 envs, 23838 chars -> data_tex | \begin{picture}(40,44) ...]
}

\caption{Beyond $p^2$-alcoves for $p=7$}
\end{center}
\end{figure}

\section{The quantum case}
As in the introduction we let $U_q$ denote the quantum group of type $G_2$ at a root of unity $q\in \C$. More precisely,
we start with the generic quantum group $U_v$ of type $G_2$ over the field $\Q(v)$, $v$ being an indeterminate. The generators in $U_v$ are denoted $F_i, E_i, K_i^{\pm}$,
$i = 1, 2$. We choose the enumeration such that the indices $1$ and $2$ correspond in the notation of the previous sections to the short simple root $\alpha$ and the 
long simple root $\beta$, respectively. 
Then we set $A = \Z[v, v^{-1}]$ and let $U_A$ denote the Lusztig $A$-form in $U_v$, see e.g. \cite {L90}. Considering $\C$ as an
$A$-algebra via the specialization $v \mapsto q$ we then define $U_q = U_A \otimes_A \C$. This construction makes sense for all non-zero $q \in \C$ but for our purpose the
interesting case is when $q$ is a root of unity. So {\em for us $q \in \C$ will always denote a primitive root of $1$ of order $l>5$ and we assume $(l,6) = 1$}.

We have a triangular decomposition $U_v = U_v^-U_v^0U_v^+$ with $U_v^-$, $U_v^0$ and $U_v^+$, being the subalgebras generated by the $F$'s, $K$'s, and $E$'s, respectively.
There is a corresponding decomposition $U_q = U_q^-U_q^0U_q^+$. Following the convention in \cite{APW1} we set $B_q = U_q^-U_q^0$ and denote by $H^0_q$ the induction functor 
from integrable $B_q$-modules to integrable $U_q$-modules. This is a
left exact functor and we denote its $i$-th right derived functor by $H_q^i$, $i\in \N$. Setting $X = \Z^2$ each $\lambda \in X$ defines a $1$-dimensional $B_q$-module which we
also denote $\lambda$ and then the $H^i_q(\lambda)$'s are quantized counterparts of the line bundle cohomology studied in the previous sections. For details we refer to
\cite{APW1}.

Now all the methods and techniques from Sections 2-3 apply in this case as well. In addition to \cite{APW1} we refer to \cite{APW2},
\cite{AW}
which
we need for non prime power $l$, and \cite{A03}. These allow us to describe
completely the vanishing behavior for $H^i_q(\lambda)$ for all $i\in \N$ and all $\lambda \in X$. The results are completely the same as those described in Figures 2-11 with 
the only
difference being that $p$ should be replaced by $l$ and there are no upper bounds on $\lambda$ (i.e.,
the condition $\lambda \in P_2$ is not relevant in the quantum case).
So we can summarize this as

\begin{prop}
The vanishing behavior of the cohomology modules $H^i_q(\lambda)$ 
of the quantum algebra
is given by Fig. 9 with all alcoves being $l$-alcoves and with the figure extended to the whole plane.
\end{prop}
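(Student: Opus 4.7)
The strategy is to repeat the analysis of Sections 4--7 verbatim in the quantum setting and to explain why the Section 8 obstruction disappears when $U_q$ replaces $G$.

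First, every tool of Sections 2--3 has a quantum counterpart. The strong linkage principle, Kempf vanishing, Serre duality, translation functors and wall-crossing sequences are developed in \cite{APW1}, with the non prime power case of $l$ covered by \cite{APW2} and \cite{AW}. In particular the quantum Frobenius splitting isomorphism takes the form
\[
H^i_q(l\cdot\lambda)\simeq H^i_{\mathrm{cl}}(\lambda)^{(1)}\otimes\St_l,
\]
where $(1)$ denotes the quantum Frobenius pullback landing in $U(\mathfrak{g}_\C)$-modules and $H^i_{\mathrm{cl}}$ is classical line bundle cohomology on the complex flag variety for $G$. This delivers the quantum analogues of Propositions 3.1--3.3. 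The universal coefficients sequence (3.2) has a counterpart over the Lusztig $A$-form, yielding Proposition 3.4, and the $\gamma$-sequences of Proposition 3.5 are built into the proof of the quantum strong linkage in \cite{APW1}. With these ingredients each of Sections 4, 5, 6, 7 can be run step for step with $p$ replaced by $l$, producing exactly the pattern of Figures 3--8 for every weight in the region $\{\lambda\in X\mid\langle\lambda+\rho,\gamma^\vee\rangle<l^2\ \forall\gamma\in R\}$. At the two places (5.2 and 6.2) where \cite{A86} was invoked to locate the composition factor $L(A_6)$, we use instead that the Lusztig conjecture holds unconditionally for $U_q$ at a root of unity by \cite{A03}, so the relevant multiplicities are available without any "Jantzen region" restriction.

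The decisive additional point is that Section 8 has no quantum analogue. The splitting isomorphism above reduces $H^i_q$ at $l$-divisible weights to the classical cohomology $H^i_{\mathrm{cl}}$, which satisfies Bott's theorem \cite{B} exactly; in particular no new vanishing phenomenon can appear at higher powers of $l$. The quantum form of Remark 3.1 therefore propagates the $l^2$-pattern to all of $X$: given any $\lambda\in X$ one picks $r\in\N$ and $\chi$ in the closure of the $l^2$-region with $\lambda\in l^r\cdot\chi\pm X_{l^r}$, and the vanishing of $H^i_q(\lambda)$ is controlled by that of $H^i_q(\chi)$ via an iteration of Frobenius and translation. I expect the main subtlety in a formal write-up to be the uniform verification of the "only if" direction of this principle: one must show that the mechanism producing the exceptional non-vanishing exhibited in Section 8 cannot operate in the quantum case. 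However, because the Frobenius-pulled-back factor in the splitting is \emph{literally} classical cohomology, and the remaining perturbation by $X_{l^r}$ is exactly the situation already handled for $P_2$, the modular obstruction simply is not present, and the figures of Sections 4--7 describe the full vanishing behaviour of $H^\bullet_q$ on all of $X$.
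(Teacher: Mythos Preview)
Your approach matches the paper's in spirit: the methods of Sections 2--7 carry over to $U_q$ via \cite{APW1}, \cite{APW2}, \cite{AW}, \cite{A03}, and your remark that the Lusztig conjecture at roots of unity replaces the appeals to \cite{A86} is exactly right.

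There is, however, a conceptual muddle in your extension step. You frame the argument as first establishing the pattern inside an ``$l^2$-region'' and then propagating outward via an $l^r$-principle $\lambda\in l^r\cdot\chi\pm X_{l^r}$ together with ``iteration of Frobenius''. But in the quantum setting there is no iteration: the quantum Frobenius is a single step landing in classical $U(\mathfrak g_\C)$-modules, and there is no meaningful $l^r$-hierarchy for $r\ge 2$. The paper's point is simpler and more direct. Because the Frobenius-twisted factor in $H^i_q(l\cdot\lambda)\simeq H^i_{\mathrm{cl}}(\lambda)^{(1)}\otimes\St_l$ is governed by Bott's theorem \emph{everywhere}, the quantum analogue of Proposition~3.1 holds at \emph{every} special point, with no $\overline{P_2}$ hypothesis. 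Consequently the arguments of Sections 4--7 (which proceed special-point-by-special-point, marking the surrounding alcoves $\times$, $\circ$, $\lozenge$, $\blacklozenge$) apply uniformly across the whole plane from the outset; there is nothing to propagate, and the Section~8 phenomenon never arises because it was caused precisely by Proposition~3.1 failing outside $\overline{P_2}$ in the modular case. So drop the $l^r$-iteration paragraph and instead observe that the removal of the $P_2$ restriction in Proposition~3.1 makes Sections 4--7 global.
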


\end{document}